\def\Circlearrowleft{\ensuremath{%
  \rotatebox[origin=c]{130}{$\circlearrowleft$}}}
\newcommand{\colourcomment}[3]
{
\ifthenelse{\boolean{commentBoolVar}}{{\color{#2}(#1: #3)}}{}
}
\newtheorem{theorem}{Theorem}[section]
\newtheorem{lemma}[theorem]{Lemma}
\newtheorem{proposition}[theorem]{Proposition}
\newtheorem{example}[theorem]{Example}
\newtheorem*{theorem*}{Theorem}
\newtheorem*{lemma*}{Lemma}
\newtheorem*{proposition*}{Proposition}
\newtheorem*{corollary*}{Corollary}
\newtheorem{remark}[theorem]{Remark}
\newcommand{\Prim}{\mathrm{Prim}} 
\newcommand{\disc}{\mathrm{disc}}
\newcommand{\Spec}{\mathrm{Spec}} 
\newcommand{\Hom}{\mathrm{Hom}}
\newcommand{\coker}{\mathrm{coker}}
\newcommand{\Z}{\mathbb Z}
\newcommand{\be}{\begin{eqnarray}}
\newcommand{\ee}{\end{eqnarray}}
\newcommand{\mmod}{\hbox{\,mod\,}}
\newcommand{\F}{\mathbb{F}}
\newcommand{\D}{\mathbb D}
\newcommand{\caH}{\mathcal H}
\begin{document}

\title{Hopf Orders in $K[C_p^3]$ in Characteristic $p$}
\author{Robert Underwood}
\address{Department of Mathematics and Department of Computer Science, Auburn University at Montgomery, Montgomery, AL, 36124 USA}
\email{runderwo@aum.edu}

\date{\today}

\maketitle



\begin{abstract} Let $p$ be prime, let $K$ be a non-archimedean local field of characteristic $p$ and let $C_p^3$ denote the 
elementary abelian group of order $p^3$.  We give a complete classification of Hopf orders in the $K$-Hopf algebra
$(K[C_p^3])^*$ and under a mild condition compute their dual Hopf orders in the group ring $K[C_p^3]$.  
\end{abstract}

\section{Introduction}

Let $K$ be a non-archimedean local field of characteristic $p$.   Equivalently, $K$ is a field of characteristic $p$ that is complete with respect to a discrete valuation 
$\nu: K\rightarrow \Z\cup \{\infty\}$ with uniformizing parameter $\pi$, $\nu(\pi)=1$.   The valuation ring is 
$R=\{x\in K\mid\nu(x)\ge  0\}$ with unique maximal ideal $\mathfrak{p} = \{x\in R\mid\nu(x)\ge 1\}$ and units 
$U(R)=\{x\in R\mid\nu(x)=0\}$.  

Let $B$ be a finite dimensional Hopf algebra over $K$.  An {\em $R$-Hopf order in $B$}\index{Hopf order} is an $R$-Hopf algebra $H$ which satisfies the following additional conditions: (i) $H$ is a finitely generated, projective (hence free) $R$-submodule of $B$ and (ii) $K\otimes_R H\cong B$ as $K$-Hopf algebras.   Necessarily, the comultiplicaton, counit and coinverse maps of $H$ are induced from those of $B$.  If $H$ is an $R$-Hopf order in $B$, then its linear dual $H^*$ is an $R$-Hopf order in $B^*$.   

This paper concerns the construction of Hopf orders in $B$ in the case that $B$ is the 
group ring $K[C_p^n]$ or its linear dual $(K[C_p^n])^*$ where $C_p^n$, $n\ge 1$, denotes the elementary abelian group of order $p^n$ with 
$C_p^n=\langle g_1,g_2,\dots,g_n\rangle$, $g_i^p=1$, $1\le i\le n$. 


The problem of classifying $R$-Hopf orders in $K[C_p^n]$, $n\ge 1$, is difficult.  The classification is complete for the cases $n=1,2$ \cite{TO70}, \cite{EU17}, but the case $n\ge 3$ remains open.  On the dual side the problem has been solved by A. Koch \cite{Ko17}, who has given a complete characterization of all $R$-Hopf orders in $(K[C_p^n])^*$, $n\ge 1$.  From Koch's work we conclude that Hopf orders in $(K[C_p^n])^*$ are determined by $n(n+1)/2$ parameters.   

One might hope to use Koch's result to classify Hopf orders in $K[C_p^n]$ by first computing the Hopf orders in $(K[C_p^n])^*$ and then taking their duals to obtain all Hopf orders in $K[C_p^n]$   
but difficulties arise in computing the duals in $K[C_p^n]$. 
At the very least we expect that Hopf orders in $K[C_p^n]$ can be classified using $n(n+1)/2$ parameters (since their duals in $(K[C_p^n])^*$ require exactly $n(n+1)/2$ parameters) but even this is not obvious.

In this paper we address the $n=3$ case and give a classification (under a mild condition) of $R$-Hopf orders in $K[C_p^3]$.  We achieve this by first computing all Hopf orders in $(K[C_p^3])^*$ using a cohomological argument employed by C. Greither in the $n=2$, characteristic $0$ case \cite[Part I]{Gr92}.    We  find that an $R$-Hopf
order in $(K[C_p^3])^*$ appears as
\[J= R[\pi^{i_1}(\xi_{1,0,0}-\mu\xi_{0,1,0}-\alpha\xi_{0,0,1}),\pi^{i_2}(\xi_{0,1,0}-\beta\xi_{0,0,1}),\pi^{i_3}\xi_{0,0,1}],\]
constructed using $6$ parameters, $i_1,i_2,i_3,\mu,\alpha,\beta$, which satisfy certain conditions.

Under a mild restriction we then compute their duals in $K[C_p^3]$, finding that the duals require $6$ parameters as well.   A dual Hopf order in $K[C_p^3]$ can be written as a ``truncated exponential" Hopf order 
\[H= R\left [{g_1-1\over\pi^{i_1}},{g_2g_1^{[\mu]}-1\over\pi^{i_2}},
{g_3g_1^{[\alpha]}(g_2g_1^{[\mu]})^{[\beta]}-1\over \pi^{i_3}}\right ]\]
using the same parameters, $i_1,i_2,i_3,\mu,\alpha,\beta$.  Hopf orders of this form have appeared in the work of 
N. Byott and G. Elder as realizable Hopf orders \cite[\S5.3]{BE18}. 

The author would like to thank Alan Koch for helpful discussions concerning \cite{Ko17}. 

\section{Koch's classification of Hopf orders in $(K[C_p^n])^*$}

Let $H$ be a Hopf algebra over a commutative ring with unity $D$.  An element $h\in H$ is {\em primitive} if $\Delta(h)=1\otimes h+h\otimes 1$.   The Hopf algebra $H$ is {\em primitively generated} if it is generated as a $D$-algebra by primitive elements.  The $K$-Hopf algebra $(K[C_p^n])^*$ is primitively generated. We have
\[(K[C_p^n])^* = K[t_1,t_2,\dots,t_n],\]
with $\Delta(t_i)=1\otimes t_i+t_i\otimes 1$ and $t_i^p=t_i$ for $1\le i\le n$, see \cite[Example 2.6]{Ko17}.


In \cite{Ko17} A. Koch has completely determined all $R$-Hopf orders in   
$(K[C_p^n])^*$ using the categorical equivalence between the category of primitively generated $R$-Hopf algebras and the category of finite 
$R[F]$-modules that are free as $R$-modules, i.e.,  the category of {\em Dieudonn\'{e} modules}\index{Dieudonn\'{e} modules} in the sense of 
\cite[\S 2]{Jo93}.  Here $F$ is an indeterminate in the non-commutative polynomial ring $R[F]$; $F$ satisfies the condition $Fa=a^pF$ for all $a\in R$.

We review Koch's result.   Let $J$ be a primitively generated $R$-Hopf algebra.  There exists
a matrix $A=(a_{i,j})\in \mathrm{Mat}_n(R)$ for which
\[J=R[u_1,u_2,\dots,u_n]/I\]
where $I$ is the ideal in $R[u_1,u_2,\dots,u_n]$ generated by $\{u_i^p-\sum_{j=1}^n a_{j,i}u_j\}_{1\le i\le n}$,
with $u_i$ primitive for $1\le i\le n$; $A$ is the matrix {\em associated to $J$}.    For a matrix $M=(m_{i,j})\in \mathrm{Mat}_n(K)$, let $M^{(p)}$ be the matrix whose
$i,j$th entry is $m_{i,j}^p$. 

\begin{theorem}[Koch] \label{koch} \hfill\break

(i)\ Let $n\ge 1$, let $\Theta=(\theta_{i,j})$ be a lower triangular matrix in $\mathrm{GL}_n(K)$ and let $A=\Theta^{-1}\Theta^{(p)}$.  Suppose that $A=(a_{i,j})\in \mathrm{Mat}_n(R)$.  Let 
\[J=R[u_1,u_2,\dots,u_n]/I\]
where $I$ is the ideal in $R[u_1,u_2,\dots,u_n]$ generated by $\{u_i^p-\sum_{j=1}^n a_{j,i}u_j\}_{1\le i\le n}$,
with $u_i$ primitive for $1\le i\le n$.  
There is a Hopf embedding $J\rightarrow (K[C_p^n])^*$ given as
\[u_i\mapsto \sum_{j=1}^n \theta_{j,i}t_j,\]
$t_j^p=t_j$, $t_j$ primitive.  The image of $J$ under this embedding is 
\[H_\Theta = R[\{\sum_{j=1}^n \theta_{j,i}t_j,1\le i\le n\}],\]
which is an $R$-Hopf order in $(K[C_p^n])^*$.  

\vspace{.2cm}

(ii)\ Let $J$ be an $R$-Hopf order in $(K[C_p^n])^*$.   Then $J$ is a primitively generated $R$-Hopf algebra
and there exists $A=(a_{i,j})\in \mathrm{Mat}_n(R)$ with 
\[J=R[u_1,u_2,\dots,u_n]/I\]
where $I$ is the ideal in $R[u_1,u_2,\dots,u_n]$ generated by $\{u_i^p-\sum_{j=1}^n a_{j,i}u_j\}_{1\le i\le n}$,
with $u_i$ primitive for $1\le i\le n$. 
Moreover, there exists a lower triangular matrix
$\Theta=(\theta_{i,j})\in \mathrm{GL}_n(K)$ so that 
\[J\cong H_\Theta=R[\{\sum_{j=1}^n \theta_{j,i}t_j,1\le i\le n\}]\subseteq (K[C_p^n])^*,\]
$t_j^p=t_j$, $t_j$ primitive.  Since $\Theta$ is lower triangular, $J$ is determined by $n(n+1)/2$ parameters.

\end{theorem}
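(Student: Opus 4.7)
The plan for (i) is a direct verification. Define the $R$-algebra map $\varphi\colon R[u_1,\dots,u_n]\to (K[C_p^n])^*$ by $u_i\mapsto \sum_j\theta_{j,i}t_j$. Since each $t_j$ is primitive, so is each image, and therefore $\varphi$ automatically respects the coproduct. The algebraic relations of $J$ must then be checked: computing $\varphi(u_i)^p=\sum_j\theta_{j,i}^p t_j$ (using $t_j^p=t_j$ and that we are in characteristic $p$), and $\sum_j a_{j,i}\varphi(u_j)=\sum_k(\Theta A)_{k,i}t_k$, the two sides agree precisely because $\Theta A=\Theta^{(p)}$, which is the hypothesis $A=\Theta^{-1}\Theta^{(p)}$. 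Injectivity of the induced map on $J$ follows from $\Theta\in\mathrm{GL}_n(K)$, and the image $H_\Theta$ is a free $R$-module with $R$-basis $\{u_1^{e_1}\cdots u_n^{e_n}:0\le e_i<p\}$, since the relations with coefficients $a_{j,i}\in R$ reduce any monomial to one of this form. Comparing $K$-dimensions gives $K\otimes_R H_\Theta=(K[C_p^n])^*$, so $H_\Theta$ is indeed an $R$-Hopf order.

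For (ii), the main obstacle is showing that any $R$-Hopf order $J\subseteq (K[C_p^n])^*$ is primitively generated. My approach is through the Dieudonn\'e module equivalence of \cite{Jo93}. Observe that $M:=\Prim(J)$ is naturally an $R[F]$-module, where $F$ acts by $x\mapsto x^p$ (which preserves primitivity in characteristic $p$). Since $\Prim((K[C_p^n])^*)=\bigoplus_j K t_j$ is $n$-dimensional and $M\otimes_R K$ embeds into it, $M$ is torsion-free of rank at most $n$, hence free over the PID $R$. The crucial step is to show that $M$ has rank exactly $n$ and generates $J$ as an $R$-algebra; this is where the Dieudonn\'e equivalence is essential, since under that equivalence the primitively generated Hopf algebra attached to the $R[F]$-module $M$ matches $J$ in $R$-rank ($p^n$), forcing the two to coincide and producing a presentation $J=R[u_1,\dots,u_n]/I$ with primitive $u_i$ and relations $u_i^p-\sum_j a_{j,i}u_j$.

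Once primitivity is in hand, I would pick an $R$-basis $v_1,\dots,v_n$ of $M$ and express $v_i=\sum_j\tilde\theta_{j,i}t_j$, producing an invertible $\tilde\Theta\in\mathrm{GL}_n(K)$. Applying the Iwasawa-style decomposition for $\mathrm{GL}_n$ over the discrete valuation ring $R$, I obtain $C\in\mathrm{GL}_n(R)$ such that $\tilde\Theta C$ is lower triangular; replacing the $v_i$ by $u_i=\sum_j C_{j,i}v_j$ gives a new $R$-basis of $M$ with associated matrix $\Theta:=\tilde\Theta C$ lower triangular. Finally, since $u_i^p$ is primitive and lies in $J$, it belongs to $M=\bigoplus R u_j$, so the coefficients $a_{j,i}$ determined by $u_i^p=\sum_j a_{j,i}u_j$ lie in $R$; the matrix identity $\Theta^{(p)}=\Theta A$ then yields $A=\Theta^{-1}\Theta^{(p)}\in\mathrm{Mat}_n(R)$, completing the identification $J\cong H_\Theta$. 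Since $\Theta$ is lower triangular with $n(n+1)/2$ potentially nonzero entries, this gives the stated parameter count.
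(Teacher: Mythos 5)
Your part (i) is correct and is essentially the same verification the paper delegates to \cite[Theorem 4.2]{Ko17}: primitivity of the generators makes the map a coalgebra map, the identity $\Theta A=\Theta^{(p)}$ makes the relations match, and the rank count identifies $K\otimes_R H_\Theta$ with $(K[C_p^n])^*$. The latter half of your part (ii) is also sound: the Iwasawa-type decomposition $\mathrm{GL}_n(K)=B(K)\cdot\mathrm{GL}_n(R)$ to make $\Theta$ lower triangular (this is what the paper cites \cite[Theorem 6.1]{Ko17} for), and the observation that $u_i^p$ is a primitive element of $J$ and hence lies in $\Prim(J)=\bigoplus_j Ru_j$, which gives $A\in\mathrm{Mat}_n(R)$, are both fine.

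The genuine gap is the pivotal claim of part (ii): that $J$ is generated as an $R$-algebra by $M=\Prim(J)$. Your justification --- that the primitively generated Hopf algebra $R[M]$ attached to $M$ under the Dieudonn\'e equivalence ``matches $J$ in $R$-rank ($p^n$), forcing the two to coincide'' --- is a non sequitur: two free $R$-submodules of the same rank inside a $K$-vector space need not be equal (already for $n=1$, $R[\pi t_1]\subsetneq R[t_1]$ are both Hopf orders of rank $p$). Equal rank only gives $R[M]\subseteq J$ with torsion quotient, not equality. Moreover, invoking the antiequivalence to conclude anything about $J$ itself presupposes that $J$ lies in the category of primitively generated Hopf algebras, which is exactly what you are trying to prove --- the argument as written is circular. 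This is the actual content of Koch's theorem (the paper's one-line assertion ``Since $(K[C_p^n])^*$ is primitively generated, $J$ is a primitively generated $R$-Hopf algebra'' is likewise not obvious and is precisely what \cite{Ko17} establishes, via the Dieudonn\'e correspondence applied to the integral group scheme $\Spec\,J^*$, not merely to $\Prim(J)$). A smaller, fixable omission: you assert but do not show that $M$ has rank exactly $n$; this follows since $\pi^N t_j$ is primitive and lies in $J$ for $N\gg 0$, so $M\otimes_R K=\bigoplus_j Kt_j$.
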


\begin{proof} (Sketch)  

(i):\ Since $A=\Theta^{-1}\Theta^{(p)}$ is in $\mathrm{Mat}_n(R)$, \cite[Theorem 4.2]{Ko17} applies to show 
that $H_\Theta$ is an $R$-Hopf order in $(K[C_p^n])^*$.

For (ii):\ Since $(K[C_p^n])^*$ is primitively generated, $J$ is a primitively generated $R$-Hopf algebra.  
Let $A=(a_{i,j})\in \mathrm{Mat}_n(R)$ be the matrix associated 
to $J$, i.e.,
\[J=R[u_1,u_2,\dots,u_n]/I\]
where $I$ is the ideal in $R[u_1,u_2,\dots,u_n]$ generated by $\{u_i^p-\sum_{j=1}^n a_{j,i}u_j\}_{1\le i\le n}$,
with $u_i$ primitive for $1\le i\le n$.   Note that $J$ is generically isomorphic to $(R[C_p^n])^*$, i.e.,
\[K\otimes_R J\cong K\otimes_R (R[C_p^n])^*\cong (K[C_p^n])^*\] 
as $K$-Hopf algebras.   Thus 
by \cite[Corollary 4.2]{Ko17}, there exists a matrix 
$\Theta=(\theta_{i,j})\in \mathrm{GL}_n(K)$ with $\Theta A =\Theta^{(p)}$, hence 
\[A=\Theta^{-1}\Theta^{(p)}.\]

Let 
\[H_\Theta = R[\{\sum_{j=1}^n \theta_{j,i}t_j,1\le i\le n\}]\subseteq (K[C_p^n])^*,\]
$t_i^p=t_i$, $t_i$ primitive.  By \cite[Theorem 4.3]{Ko17}, 
$\Theta$ determines an isomorphism $J\rightarrow H_\Theta$ (an embedding of $J$ into $(K[C_p^n])^*$) 
defined by
\[u_i\mapsto \sum_{j=1}^n \theta_{j,i}t_j\]
for $1\le i\le n$.  

From \cite[Theorem 6.1]{Ko17}, we can assume that $\Theta$ is lower triangular thus $H_\Theta$ is determined by $n(n+1)/2$ parameters. 

\end{proof}


In the examples that follow we illustrate how to use Theorem \ref{koch}(i) to construct Hopf orders in the cases $n=1,2$.

\begin{example} \label{TO} $n=1$.  In this case we choose $\Theta = (\Theta_{1,1})\in \mathrm{GL}_1(K)\cong K^\times$,
with $\Theta_{1,1}=\pi^i$ for some integer $i$.  To construct an $R$-Hopf order in $(K[C_p])^*$, we require that
\[A=\Theta^{-1}\Theta^{(p)}=(\pi^{-i})(\pi^{pi})=(\pi^{(p-1)i})\]
is in $\mathrm{Mat}_1(R)$. Thus we require $i\ge 0$.  Let
\[J=R[u_1]/(u_1^p-\pi^{(p-1)i}u_1)\]
where $u_1$ is primitive.  Then there is a Hopf embedding $J\rightarrow (K[C_p])^*=K[t_1]$
defined as $u_1\mapsto \pi^it_1$.   The image of $J$ under this embedding is 
\[H_i = R[\pi^it_1]\subseteq (K[C_p])^*;\]
$H_i$ is a {\em Tate-Oort} Hopf order in $(K[C_p])^*$ \cite{TO70}, \cite[Theorem 2.3]{EU17}.

\end{example}

\begin{example} \label{k2} $n=2$.  In this case we choose $\Theta= \begin{pmatrix}
\pi^{i}& 0 \\
\theta & \pi^j \\
\end{pmatrix}\in \mathrm{GL}_2(K)$, where $i,j$ are integers.  To construct an $R$-Hopf order in 
$(K[C_p])^*$, we require that
\[A=\Theta^{-1}\Theta^{(p)}=\begin{pmatrix} 
\pi^{(p-1)i} & 0 \\
\pi^{-j}\theta^p-\pi^{(p-1)i-j}\theta & \pi^{(p-1)j} \\
\end{pmatrix}\in \mathrm{Mat}_2(R).\]
Thus we require $i,j\ge 0$ and 
\[\nu(\pi^{-j}\theta^p-\pi^{(p-1)i-j}\theta)\ge 0.\]
(The condition $A\in \mathrm{Mat}_2(R)$ is equivalent to the conditions $i,j\ge 0$, 
$\nu(\pi^{-j}\theta^p-\pi^{(p-1)i-j}\theta)\ge 0$.)

Let 
\[J=R[u_1,u_2]/I\]
with 
\[I=(u_1^p-\pi^{(p-1)i}u_1-(\pi^{-j}\theta^p-\pi^{(p-1)i-j}\theta)u_2, u_2^p-\pi^{(p-1)j}u_2),\]
$u_1,u_2$ primitive.  Then there is a Hopf embedding $J\rightarrow (K[C_p^2])^*=K[t_1,t_2]$ defined by 
\[u_1\mapsto \pi^it_1+\theta t_2,\quad u_2\mapsto \pi^jt_2.\]
The image of $J$ under this embedding is 
\[H_{i,j,\theta} = R[\pi^it_1+\theta t_2,\pi^jt_2];\]
$H_{i,j,\theta}$ is an $R$-Hopf order in $(K[C_p^2])^*$. 
\end{example}

\begin{remark}  Koch's classification applies more broadly to construct all of the $R$-Hopf orders in any primitively generated $K$-Hopf algebra, for instance $K[t]/(t^{p^n})$, which represents the group scheme $\alpha_{p^n}$.  
\end{remark}

\section{Classification of Hopf orders in $(K[C_p^2])^*$ using Cohomology}

In \cite{EU17} G. Elder and this author use Greither's cohomological method of \cite{Gr92} 
to classify Hopf orders in $(K[C_p^2])^*$ and hence 
recover case $n=2$ of Theorem \ref{koch}.   Let $C_p^2=\langle g_1,g_2\rangle$ and let $\xi_{1,0}$, $\xi_{0,1}$ be elements of $(K[C_p^2])^*$
defined by 
\[\langle \xi_{1,0}, (g_1-1)^j(g_2-1)^k\rangle = \delta_{1,j}\delta_{0,k},\]
\[\langle \xi_{0,1}, (g_1-1)^j(g_2-1)^k\rangle = \delta_{0,j}\delta_{1,k},\]
$0\le j,k\le p-1$, where $\langle - ,- \rangle: (K[C_p^2])^*\times K[C_p^2]\rightarrow K$ is the duality map.
Then $\xi_{1,0}$ and $\xi_{0,1}$ are primitive elements in $(K[C_p^2])^*$ with $\xi_{1,0}^p=\xi_{1,0}$, $\xi_{0,1}^p=\xi_{0,1}$.  The $K$-Hopf algebra $(K[C_p^2])^*$ is primitively generated by $\xi_{1,0}$, $\xi_{0,1}$,
\[(K[C_p^2])^*=K[\xi_{1,0},\xi_{0,1}].\]
(The $\xi_{0,1}$, $\xi_{0,1}$ play the role of $t_1,t_2$ in Koch's construction given above.)


For $x\in K$, let $\wp(x)=x^p-x$.

\begin{proposition} [Elder-U.]\label{elder-u}  \hfill\break

\vspace{.2cm}

(i)  Let $i_1,i_2\ge 0$ be integers, let $\mu$ be an element of $K$ satisfying $\nu(\wp(\mu))\ge i_2-pi_1$ and let
\[H_{i_1,i_2,\mu}^*=R[\pi^{i_1}(\xi_{1,0}-\mu\xi_{0,1}), \pi^{i_2}\xi_{0,1}].\]
Then 
$H_{i_1,i_2,\mu}^*$ is an $R$-Hopf order in $(K[C_p^2])^*$. 

\vspace{.2cm}

(ii)  Let $J$ be an $R$-Hopf order in $(K[C_p^2])^*$.  Then
\[J\cong H_{i_1,i_2,\mu}^*=R[\pi^{i_1}(\xi_{1,0}-\mu\xi_{0,1}), \pi^{i_2}\xi_{0,1}],\]
where $i_1,i_2\ge 0$ are integers and $\mu$ is an element of $K$ satisfying $\nu(\wp(\mu))\ge i_2-pi_1$. 
\end{proposition}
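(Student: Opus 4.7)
The plan is to derive the proposition directly from Koch's classification (Theorem \ref{koch}) specialized to $n=2$, reading off the computations carried out in Example \ref{k2} for a particular choice of parameters. This gives a shortcut that bypasses the Greither-style cohomology actually used in \cite{EU17}.

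For part (i), I would apply Theorem \ref{koch}(i) to the lower triangular matrix
\[\Theta = \begin{pmatrix} \pi^{i_1} & 0 \\ -\pi^{i_1}\mu & \pi^{i_2} \end{pmatrix} \in \mathrm{GL}_2(K).\]
Substituting $i = i_1$, $j = i_2$, and $\theta = -\pi^{i_1}\mu$ into the formulas of Example \ref{k2}, and using $(-a)^p = -a^p$ in characteristic $p$, the bottom-left entry of $A = \Theta^{-1}\Theta^{(p)}$ simplifies to $-\pi^{pi_1 - i_2}\wp(\mu)$, while the diagonal entries are $\pi^{(p-1)i_1}$ and $\pi^{(p-1)i_2}$. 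The requirement $A \in \mathrm{Mat}_2(R)$ thus becomes exactly $i_1, i_2 \ge 0$ together with $\nu(\wp(\mu)) \ge i_2 - pi_1$. Theorem \ref{koch}(i) then produces $H_\Theta = R[\pi^{i_1}\xi_{1,0} - \pi^{i_1}\mu\,\xi_{0,1},\; \pi^{i_2}\xi_{0,1}]$, which is exactly $H_{i_1,i_2,\mu}^*$, as an $R$-Hopf order in $(K[C_p^2])^*$.

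For part (ii), I would apply Theorem \ref{koch}(ii) to the given Hopf order $J$, obtaining a lower triangular matrix $\Theta' = \begin{pmatrix} \theta_{1,1} & 0 \\ \theta_{2,1} & \theta_{2,2} \end{pmatrix} \in \mathrm{GL}_2(K)$ with $J \cong H_{\Theta'}$. Since the diagonal entries of $A' = (\Theta')^{-1}(\Theta')^{(p)}$ are $\theta_{1,1}^{p-1}$ and $\theta_{2,2}^{p-1}$ and must lie in $R$, setting $i_1 := \nu(\theta_{1,1})$ and $i_2 := \nu(\theta_{2,2})$ forces $i_1, i_2 \ge 0$. Since multiplying each generator of $H_{\Theta'}$ by a unit of $R$ does not change the $R$-algebra they generate, I may rescale so that $\theta_{1,1} = \pi^{i_1}$ and $\theta_{2,2} = \pi^{i_2}$; then defining $\mu := -\theta_{2,1}/\pi^{i_1}$ puts $\Theta'$ in the form used in part (i), and the fact that the bottom-left entry of $A'$ lies in $R$ yields $\nu(\wp(\mu)) \ge i_2 - pi_1$.

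The main obstacle is really only bookkeeping rather than anything conceptual. One must verify that the rescaling step---adjusting each generator by a unit so that $\theta_{1,1}$ and $\theta_{2,2}$ become powers of $\pi$---preserves the Hopf order on the nose, and one should note that $\mu$ is not unique (it can be shifted by any element of $\pi^{i_2 - i_1}R$ by adding an $R$-multiple of the second generator to the first); however, only the existence of some admissible representative is claimed in the proposition. No deeper difficulty appears, since all the hard analysis has been absorbed into Koch's theorem.
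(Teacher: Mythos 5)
Your proposal is correct, but it takes a genuinely different route from the paper. The paper proves (i) by direct computation --- expanding $(\pi^{i_1}(\xi_{1,0}-\mu\xi_{0,1}))^p$ to exhibit a monic relation over $R[\pi^{i_2}\xi_{0,1}]$ and then checking the Hopf structure maps --- and proves (ii) by Greither's cohomological method: building the short exact sequence $R\rightarrow H\rightarrow J\rightarrow H'\rightarrow R$, passing to group schemes, resolving $\D_{i_1}^*$ by $\mathbb{G}_a$, and identifying $\mathrm{Ext}^1_{gt}(\D_{i_2}^*,\D_{i_1}^*)$ with the subgroup of $K/(\F_p+\mathfrak{p}^{i_2-i_1})$ cut out by $\nu(\wp(\mu))\ge i_2-pi_1$. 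You instead read everything off from Theorem \ref{koch} and the matrix computation of Example \ref{k2}; your change of variables $\theta=-\pi^{i_1}\mu$ is exactly the translation the paper records in Remark \ref{change}, and your bookkeeping in (ii) (the diagonal entries $\theta_{i,i}^{p-1}$ of $A'$ forcing $i_1,i_2\ge 0$, the invariance of $A'\in\mathrm{Mat}_2(R)$ under rescaling $\Theta'$ by a diagonal unit matrix, and the unit-rescaling leaving $H_{\Theta'}$ unchanged) is sound. The trade-off: your argument is shorter but leans on the full strength of Koch's classification, including the lower-triangularization step, and it only produces \emph{some} admissible $\mu$; the cohomological proof is what the paper actually needs going forward, both because it identifies the parameter $\mu$ precisely as a class in $K/(\F_p+\mathfrak{p}^{i_2-i_1})$ (pinning down when two choices give the same Hopf order) and because it is the template that Section 4 extends to classify extensions of $\D_{i_3}^*$ by the rank-$p^2$ quotient $\D_{i_1,i_2,\mu}^*$, a situation not covered by simply quoting Example \ref{k2}. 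One small point to make explicit if you write this up: you should justify that the lower-triangular $\Theta'$ supplied by Theorem \ref{koch}(ii) satisfies $(\Theta')^{-1}(\Theta')^{(p)}\in\mathrm{Mat}_2(R)$ (it does, since the $p$-th powers of the generators of the order $H_{\Theta'}$ must be $R$-linear combinations of the generators), as the paper's sketch of Theorem \ref{koch}(ii) does not state this for the triangularized matrix verbatim.
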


\begin{proof}  (Sketch)  

(i):\ We have
\begin{eqnarray*}
(\pi^{i_1}(\xi_{1,0}-\mu\xi_{0,1}))^p & = & \pi^{pi_1}(\xi_{1,0}-\mu^p\xi_{0,1}) \\
& = &  \pi^{(p-1)i_1}\pi^{i_1}(\xi_{1,0}-\mu\xi_{0,1}) + \pi^{pi_1}\mu\xi_{0,1}-\pi^{pi_1}\mu^p\xi_{0,1}\\ 
& = &  \pi^{(p-1)i_1}\pi^{i_1}(\xi_{1,0}-\mu\xi_{0,1}) -\wp(\mu)\pi^{pi_1-i_2}\pi^{i_2}\xi_{0,1}.\\ 
\end{eqnarray*}
Since $i_1\ge 0$ and $\nu(\wp(\mu))\ge i_2-pi_1$, 
$\pi^{i_1}(\xi_{1,0}-\mu\xi_{0,1})$ satisfies a monic polynomial with coefficients in $R[\pi^{i_2}\xi_{0,1}]$.  Moreover,

\[(\pi^{i_2}\xi_{0,1})^p = \pi^{(p-1)i_2}\pi^{i_2}\xi_{0,1},\]
and so, $i_2\ge 0$ guarantees that $\pi^{i_2}\xi_{0,1}$ satisfies a monic polynomial with coefficients in $R$. 
Thus the $R$-algebra $H^*_{i_1,i_2,\mu}$ is a free $R$-submodule of $(K[C_p^2])^*$.    

The $R$-algebra $H^*_{i_1,i_2,\mu}$ is an $R$-Hopf algebra with comultiplication map defined by 
\[\Delta(\pi^{i_1}(\xi_{1,0}-\mu\xi_{0,1})) = 
1\otimes (\pi^{i_1}(\xi_{1,0}-\mu\xi_{0,1}))+ (\pi^{i_1}(\xi_{1,0}-\mu\xi_{0,1}))\otimes 1,\]
\[\Delta(\pi^{i_2}\xi_{0,1}) = 1\otimes \pi^{i_2}\xi_{0,1}+\pi^{i_2}\xi_{0,1}\otimes 1,\]
counit map defined as
\[\varepsilon(\pi^{i_1}(\xi_{1,0}-\mu\xi_{0,1}))=0,\quad \varepsilon(\pi^{i_2}\xi_{0,1})=0,\]
and coinverse defined by
\[S(\pi^{i_1}(\xi_{1,0}-\mu\xi_{0,1})) = -\pi^{i_1}(\xi_{1,0}-\mu\xi_{0,1}),\quad S(\pi^{i_2}\xi_{0,1}) =-\pi^{i_2}\xi_{0,1}.\]
In addition, $K\otimes_R H^*_{i_1,i_2,\mu}\cong (K[C_p^2])^*$, as $K$-Hopf algebras.   Thus 
$H^*_{i_1,i_2,\mu}$  is an $R$-Hopf order in $(K[C_p^2])^*$. 

For (ii):\  The $R$-Hopf order $J$ induces a short exact sequence of $R$-Hopf algebras
\be \label{ses1}
R\rightarrow H\rightarrow J\rightarrow H'\rightarrow R
\ee
where $H'$, $H$ are Hopf orders in $(K[C_p])^*$.   By \cite[Theorem 2.3]{EU17}, $H'$ and $H$ are Tate-Oort Hopf orders in $(K[C_p])^*$, i.e., $H'=H_{i_1}^*\cong R[\pi^{i_1}\xi_{1,0}]$ and 
$H=H_{i_2}^*\cong R[\pi^{i_2}\xi_{0,1}]$ for integers $i_1,i_2\ge 0$. 

Let $\D_{i_1}^*=\Spec\ H_{i_1}^*$, $\D_{i_2}^*=\Spec\ H_{i_2}^*$, and 
$\D^*= \Spec\ J$ denote the corresponding group schemes over $R$.    From (\ref{ses1}) we obtain the short exact sequence of group  schemes
\be \label{ses2}
 0\longrightarrow \D_{i_1}^*\longrightarrow \D^*\longrightarrow \D_{i_2}^*\longrightarrow 0,
\ee
which over $K$ appears as
\[0\longrightarrow \Spec\ (K[C_p])^*\longrightarrow  \Spec\ (K[C_p^2])^*
\longrightarrow  \Spec\ (K[C_p])^*\longrightarrow 0.\]
The short exact sequence (\ref{ses2}) represents a class in 
$\mathrm{Ext}_{gt}^1(\D_{i_2}^*,\D_{i_1}^*)$,
which is the group of generically trivial extensions of $\D_{i_2}^*$ by $\D_{i_1}^*$.  
So computing 
$\mathrm{Ext}_{gt}^1(\D_{i_2}^*,\D_{i_1}^*)$, and ultimately the representing algebras of the middle terms of 
these short exact sequences, will completely determine $J$.  

Let ${\mathbb G}_a$ denote the additive group scheme represented by the $R$-Hopf algebra $R[x]$, 
$x$ indeterminate.  There 
is a faithfully flat resolution of group schemes

\[0\longrightarrow \D_{i_1}^*\longrightarrow {\mathbb G}_a\stackrel{\Psi}{\longrightarrow} {\mathbb G}_a\longrightarrow 0,\]
$\Psi(x)=x^p-\pi^{(p-1)i_1}x$, and through the long exact sequence we obtain an isomorphism 
\[\mathrm{coker}(\Psi: \Hom(\D_{i_2}^*,{\mathbb G}_a)^{\Circlearrowleft})_{\mathalpha{gt}}\cong 
\mathrm{Ext}_{gt}^1(\D_{i_2}^*,\D_{i_1}^*).\]
By \cite[Proposition 4.5]{EU17}, $\coker(\Psi:\mathrm{Hom}(\D_{i_2}^*,{\mathbb G})^{\Circlearrowleft})_{\mathalpha{gt}}$ is isomorphic to the additive subgroup of 
\[K/({\mathbb F}_p+\mathfrak{p}^{i_2-i_1})\] 
represented by those elements $\mu\in K$ 
satisfying $\nu(\wp(\mu))\ge i_2-pi_1$.   We conclude that classes $[\mu]$ in $K/({\mathbb F}_p+\mathfrak{p}^{i_2-i_1})$ 
correspond to equivalence classes of generically trivial short exact sequences
\[0\longrightarrow \D_{i_1}^*\longrightarrow \D^*\longrightarrow \D_{i_2}^*\longrightarrow 0.\]
Using a push-out diagram, the representing Hopf algebra of $\D^*$ is 
\[J\cong H_{i_1,i_2,\mu}^*=R[\pi^{i_1}(\xi_{1,0}-\mu\xi_{0,1}), \pi^{i_2}\xi_{0,1}].\]

\end{proof}

\begin{remark} \label{change} Note that $H_{i_1,i_2,\mu}^*$ is Koch's $H_{i,j,\theta}$ 
of Example \ref{k2} with 
$\mu = -\pi^{-i}\theta$,
$\xi_{1,0}=t_1$, $\xi_{0,1}=t_2$, $i_1=i$, and $i_2=j$.  With this change of notation 
\[H_{i_1,i_2,\mu}^*\cong R[u_1,u_2]/I\]
where $I$ is generated by 
\begin{eqnarray*}
u_1^p-\pi^{(p-1)i}u_1-(\pi^{-j}\theta^p-\pi^{(p-1)i-j}\theta)u_2 & =  &   
u_1^p-\pi^{(p-1)i_1}u_1+(\pi^{-i_2}\pi^{pi_1}\mu^p-\pi^{(p-1)i_1-i_2}\pi^{i_1}\mu)u_2 \\
& = &  u_1^p-\pi^{(p-1)i_1}u_1+\wp(\mu)\pi^{pi_1-i_2}u_2 \\
\end{eqnarray*}
and 
\[u_2^p-\pi^{(p-1)j}u_2=u_2^p-\pi^{(p-1)i_2}u_2.\]
Moreover, Koch's condition 
\[\nu(\pi^{-j}\theta^p-\pi^{(p-1)i-j}\theta)\ge 0\]
becomes
\[\nu(\wp(\mu))\ge i_2-pi_1.\]
\end{remark}

\section{Hopf orders in $(K[C_p^3])^*$}

We can extend Proposition \ref{elder-u} to the case $n=3$.  Let $C_p^3=\langle g_1,g_2,g_3\rangle$ and let $\xi_{1,0,0}$, $\xi_{0,1,0}$, $\xi_{0,0,1}$ be elements of $(K[C_p^3])^*$
defined by 
\[\langle \xi_{1,0,0}, (g_1-1)^j(g_2-1)^k(g_3-1)^l\rangle = \delta_{1,j}\delta_{0,k}\delta_{0,l},\]
\[\langle \xi_{0,1,0}, (g_1-1)^j(g_2-1)^k(g_3-1)^l\rangle = \delta_{0,j}\delta_{1,k}\delta_{0,l},\]
\[\langle \xi_{0,0,1}, (g_1-1)^j(g_2-1)^k(g_3-1)^l\rangle = \delta_{0,j}\delta_{0,k}\delta_{1,l},\]
$0\le j,k,l\le p-1$, where $\langle - ,- \rangle: (K[C_p^3])^*\times K[C_p^3]\rightarrow K$ is the duality map.  
Then $\xi_{1,0,0}$, $\xi_{0,1,0}$ and $\xi_{0,0,1}$ are primitive elements in $(K[C_p^3])^*$ with 
$\xi_{1,0,0}^p=\xi_{1,0,0}$, $\xi_{0,1,0}^p=\xi_{0,1,0}$ and $\xi_{0,0,1}^p=\xi_{0,0,1}$.  
The $K$-Hopf algebra $(K[C_p^3])^*$ is primitively generated by $\xi_{1,0,0}$, $\xi_{0,1,0}$, $\xi_{0,0,1}$; 
\[(K[C_p^3])^*=K[\xi_{1,0,0},\xi_{0,1,0},\xi_{0,0,1}].\]

\begin{proposition} \label{3case_i}  Let $i_1,i_2,i_3\ge 0$ be integers, let $\mu,\alpha,\beta$ be elements of $K$ satisfying $\nu(\wp(\mu))\ge i_2-pi_1$, $\nu(\wp(\alpha)+\wp(\mu)\beta)\ge i_3-pi_1$ and $\nu(\wp(\beta))\ge i_3-pi_2$.   Let
\[H^*_{i_1,i_2,i_3,\mu,\alpha,\beta} = R[\pi^{i_1}(\xi_{1,0,0}-\mu\xi_{0,1,0}-\alpha\xi_{0,0,1}),\pi^{i_2}(\xi_{0,1,0}-\beta\xi_{0,0,1}),\pi^{i_3}\xi_{0,0,1}].\]
Then $H^*_{i_1,i_2,i_3,\mu,\alpha,\beta}$ is an $R$-Hopf order in $(K[C_p^3])^*$. 
\end{proposition}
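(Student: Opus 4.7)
The plan is to follow the template of Proposition \ref{elder-u}(i), adapted to the three-generator setting: for each generator compute its $p$-th power, show that under the hypotheses this lies in the $R$-subalgebra generated by the ``later'' generators, deduce that $H^*:=H^*_{i_1,i_2,i_3,\mu,\alpha,\beta}$ is a finitely generated free $R$-submodule of $(K[C_p^3])^*$, and finally observe that the Hopf structure restricts. Abbreviate
\[w_1=\pi^{i_1}(\xi_{1,0,0}-\mu\xi_{0,1,0}-\alpha\xi_{0,0,1}),\quad w_2=\pi^{i_2}(\xi_{0,1,0}-\beta\xi_{0,0,1}),\quad w_3=\pi^{i_3}\xi_{0,0,1}.\]

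The heart of the argument is a direct Frobenius calculation for each $w_i$, using $\chr K=p$ so that the $p$-th power map on $(K[C_p^3])^*$ is a ring endomorphism and $\xi_{a,b,c}^p=\xi_{a,b,c}$. From $\xi_{0,0,1}^p=\xi_{0,0,1}$ one immediately gets $w_3^p=\pi^{(p-1)i_3}w_3\in R[w_3]$ (using $i_3\ge 0$). For $w_2$, substituting $\pi^{i_2}\xi_{0,1,0}=w_2+\beta\pi^{i_2-i_3}w_3$ after taking the Frobenius gives
\[w_2^p=\pi^{(p-1)i_2}w_2-\wp(\beta)\pi^{pi_2-i_3}w_3,\]
which lies in $R[w_2,w_3]$ precisely because of the hypothesis $\nu(\wp(\beta))\ge i_3-pi_2$. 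For $w_1$ the same procedure---apply Frobenius, then use the two substitutions expressing $\xi_{0,1,0}$ and $\xi_{0,0,1}$ in terms of $w_2,w_3$---yields
\[w_1^p=\pi^{(p-1)i_1}w_1-\wp(\mu)\pi^{pi_1-i_2}w_2-(\wp(\alpha)+\wp(\mu)\beta)\pi^{pi_1-i_3}w_3,\]
and the two remaining hypotheses $\nu(\wp(\mu))\ge i_2-pi_1$ and $\nu(\wp(\alpha)+\wp(\mu)\beta)\ge i_3-pi_1$ are exactly what is needed to place the coefficients in $R$.

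From these three monic integral relations, $H^*$ is $R$-spanned by the $p^3$ monomials $\{w_1^{a_1}w_2^{a_2}w_3^{a_3}:0\le a_i\le p-1\}$. Solving the three defining equations for $\xi_{0,0,1}$, $\xi_{0,1,0}$, $\xi_{1,0,0}$ in that order shows that $K\otimes_R H^*$ contains all three $\xi_{i,j,k}$, so $K\otimes_R H^*=(K[C_p^3])^*$, which has $K$-dimension $p^3$; hence the $p^3$ monomials are $K$-linearly independent and form a free $R$-basis of $H^*$. Each $w_i$ is an $R$-linear combination of primitive elements of $(K[C_p^3])^*$ and is therefore primitive, so $\Delta(w_i)=1\otimes w_i+w_i\otimes 1\in H^*\otimes_R H^*$, $\varepsilon(w_i)=0$, $S(w_i)=-w_i$, and the Hopf structure of $(K[C_p^3])^*$ restricts to make $H^*$ an $R$-Hopf order.

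The principal obstacle is the $w_1^p$ computation, where one must be attentive about the order of the substitutions: writing $\pi^{pi_1}\xi_{0,1,0}$ in terms of $w_2,w_3$ introduces an extra term $\wp(\mu)\beta$, which combines with the $\wp(\alpha)$ coming from the direct substitution of $\xi_{0,0,1}$. The composite coefficient $\wp(\alpha)+\wp(\mu)\beta$ is what forces the particular shape of the second hypothesis and distinguishes the $n=3$ case from a naive iteration of $n=2$.
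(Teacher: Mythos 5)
Your proposal is correct and follows essentially the same route as the paper: the same Frobenius computation yielding $w_1^p=\pi^{(p-1)i_1}w_1-\wp(\mu)\pi^{pi_1-i_2}w_2-(\wp(\alpha)+\wp(\mu)\beta)\pi^{pi_1-i_3}w_3$, with the three valuation hypotheses placing the coefficients in $R$, followed by freeness and the observation that the primitive generators make the Hopf structure restrict. The only cosmetic difference is that you compute $w_2^p$ and $w_3^p$ explicitly, whereas the paper delegates that to the already-established $n=2$ case.
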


\begin{proof}  We have

\begin{eqnarray*}
(\pi^{i_1}(\xi_{1,0,0}-\mu\xi_{0,1,0}-\alpha\xi_{0,0,1}))^p & = &  \pi^{pi_1}(\xi_{1,0,0}-\mu^p\xi_{0,1,0}-
\alpha^p\xi_{0,0,1}) \\
& = &  \pi^{(p-1)i_1}\pi^{i_1}(\xi_{1,0,0}-\mu\xi_{0,1,0}-\alpha\xi_{0,0,1}) \\
&& +\ \pi^{pi_1}\mu\xi_{0,1,0}+\pi^{pi_1}\alpha\xi_{0,0,1}-\pi^{pi_1}\mu^p\xi_{0,1,0}-\pi^{pi_1}\alpha^p\xi_{0,0,1} \\
& = &  \pi^{(p-1)i_1}\pi^{i_1}(\xi_{1,0,0}-\mu\xi_{0,1,0}-\alpha\xi_{0,0,1}) \\
&& -\ \wp(\mu)\pi^{pi_1}\xi_{0,1,0}-\wp(\alpha)\pi^{pi_1}\xi_{0,0,1} \\
& = &  \pi^{(p-1)i_1}\pi^{i_1}(\xi_{1,0,0}-\mu\xi_{0,1,0}-\alpha\xi_{0,0,1}) \\
&& -\ \wp(\mu)\pi^{pi_1-i_2}\pi^{i_2}\xi_{0,1,0}+\wp(\mu)\beta\pi^{pi_1-i_2}\pi^{i_2}\xi_{0,0,1} \\
&&\quad -\ \wp(\mu)\beta\pi^{pi_1-i_3}\pi^{i_3}\xi_{0,0,1} -\wp(\alpha)\pi^{pi_1-i_3}\pi^{i_3}\xi_{0,0,1} \\
& = &  \pi^{(p-1)i_1}\pi^{i_1}(\xi_{1,0,0}-\mu\xi_{0,1,0}-\alpha\xi_{0,0,1}) \\
&& -\ \wp(\mu)\pi^{pi_1-i_2}\pi^{i_2}(\xi_{0,1,0}-\beta\xi_{0,0,1}) \\
&&\quad -\ (\wp(\alpha)+\wp(\mu)\beta)\pi^{pi_1-i_3}\pi^{i_3}\xi_{0,0,1}.
\end{eqnarray*}
Since $i_1\ge 0$, $\nu(\wp(\mu))\ge i_2-pi_1$ and $\nu(\wp(\alpha)+\wp(\mu)\beta)\ge i_3-pi_1$,
the generator \[\pi^{i_1}(\xi_{1,0,0}-\mu\xi_{0,1,0}-\alpha\xi_{0,0,1})\] 
satisfies a monic polynomial with coefficients in 
$R$-Hopf order $R[\pi^{i_2}(\xi_{0,1,0}-\beta\xi_{0,0,1}),\pi^{i_3}\xi_{0,0,1}]$. Thus the $R$-algebra
$H^*_{i_1,i_2,i_3,\mu,\alpha,\beta}$ is a free $R$-submodule of $(K[C_p^3])^*$.

The $R$-algebra $H^*_{i_1,i_2,i_3,\mu,\alpha,\beta}$ is an $R$-Hopf algebra with comultiplication map 
defined by 
\[\Delta(\pi^{i_1}(\xi_{1,0,0}-\mu\xi_{0,1,0}-\alpha\xi_{0,0,1})) = 
1\otimes (\pi^{i_1}(\xi_{1,0,0}-\mu\xi_{0,1,0}-\alpha\xi_{0,0,1}))+ (\pi^{i_1}(\xi_{1,0,0}-\mu\xi_{0,1,0}-\alpha\xi_{0,0,1}))\otimes 1,\]
\[\Delta(\pi^{i_2}(\xi_{0,1,0}-\beta\xi_{0,0,1})) = 
1\otimes (\pi^{i_2}(\xi_{0,1,0}-\beta\xi_{0,0,1}))+ (\pi^{i_2}(\xi_{0,1,0}-\beta\xi_{0,0,1}))\otimes 1,\]
\[\Delta(\pi^{i_3}\xi_{0,0,1}) = 1\otimes \pi^{i_3}\xi_{0,0,1}+\pi^{i_3}\xi_{0,0,1}\otimes 1,\]
counit map defined as
\[\varepsilon(\pi^{i_1}(\xi_{1,0,0}-\mu\xi_{0,1,0}-\alpha\xi_{0,0,1}))=0,\quad\varepsilon(\pi^{i_1}(\xi_{1,0}-\mu\xi_{0,1}))=0,\quad \varepsilon(\pi^{i_2}\xi_{0,1})=0,\]
and coinverse defined by
\[S(\pi^{i_1}(\xi_{1,0,0}-\mu\xi_{0,1,0}-\alpha\xi_{0,0,1})) = -(\pi^{i_1}(\xi_{1,0,0}-\mu\xi_{0,1,0}-\alpha\xi_{0,0,1})),\]
\[S(\pi^{i_1}(\xi_{1,0}-\mu\xi_{0,1})) = -\pi^{i_1}(\xi_{1,0}-\mu\xi_{0,1}),\quad S(\pi^{i_2}\xi_{0,1}) =-\pi^{i_2}\xi_{0,1}.\]
In addition, $K\otimes_R H^*_{i_1,i_2,i_3,\mu,\alpha,\beta}\cong (K[C_p^3])^*$, as $K$-Hopf algebras.   Thus 
$H^*_{i_1,i_2,i_3,\mu,\alpha,\beta}$  is an $R$-Hopf order in $(K[C_p^3])^*$. 
\end{proof}

We want to show that an arbitrary $R$-Hopf order $J$ in $(K[C_p^3])^*$ can be written as 
$H^*_{i_1,i_2,i_3,\mu,\alpha,\beta}$ for some parameters $i_1,i_2,i_3,\mu,\alpha,\beta$.   

An $R$-Hopf order $J$ induces a short exact sequence of $R$-Hopf algebras
\be \label{ses3}
R\rightarrow H\rightarrow J\rightarrow H'\rightarrow R
\ee
where $H'$ is an $R$-Hopf order in $(K[C_p^2])^*$ and $H$ is an $R$-Hopf order in $(K[C_p])^*$.  
By Proposition \ref{elder-u} (ii), $H'$ is of the form
\[H'\cong H_{i_1,i_2,\mu}^*= R[\pi^{i_1}(\xi_{1,0,0}-\mu\xi_{0,1,0}), \pi^{i_2}\xi_{0,1,0}],\]
for integers $i_1,i_2\ge 0$ and element $\mu\in K$ with $\nu(\wp(\mu))\ge i_2-pi_1$,
and by \cite[Theorem 2.3]{EU17}, $H$ is of the form
\[H\cong H_{i_3}^* = R[\pi^{i_3}\xi_{0,0,1}]\]
for integer $i_3\ge 0$. 

Let $\D^*=\Spec\ J$, $\D_{i_1,i_2,\mu}^*=\Spec\ H_{i_1,i_2,\mu}^*$ and 
$\D_{i_3}^*=\Spec\ H_{i_3}^*$.   Then from (\ref{ses3}) we obtain the short exact sequence of $R$-group schemes
\be \label{ses4}
0\longrightarrow \D_{i_1,i_2,\mu}^*\longrightarrow \D^*\longrightarrow \D_{i_3}^*\longrightarrow 0,
\ee
which over $K$ appears as
\[0\longrightarrow \Spec\ (K[C_p^2])^*\longrightarrow  \Spec\ (K[C_p^3])^*
\longrightarrow  \Spec\ (K[C_p])^*\longrightarrow 0.\]
The short exact sequence (\ref{ses4}) represents a class in 
$\mathrm{Ext}_{gt}^1(\D_{i_3}^*,\D_{i_1,i_2,\mu}^*)$,
which is the group of generically trivial extensions of $\D_{i_3}^*$ by $\D_{i_1,i_2,\mu}^*$.  
So computing 
$\mathrm{Ext}_{gt}^1(\D_{i_3}^*,\D_{i_1,i_2,\mu}^*)$, and ultimately the representing algebras of the middle terms of 
these short exact sequences, will completely determine $J$.  

As an $R$-Hopf algebra,

\[H_{i_1,i_2,\mu}^*\cong R[u_1,u_2]/I,\]
where
\[I=(u_1^p-\pi^{(p-1)i_1}u_1+\wp(\mu)\pi^{pi_1-i_2}u_2, u_2^p-\pi^{(p-1)i_2}u_2).\] 
(Recall the change in notation of Remark \ref{change}.)    Let ${\mathbb G}_a^2$ be the $R$-group scheme represented by the $R$-Hopf algebra $R[x,y]$, 
$x,y$ indeterminates.
There is a faithfully flat resolution
\be \label{res}
0\longrightarrow \D_{i_1,i_2,\mu}^*\longrightarrow {\mathbb G}_a^2\stackrel{\Psi}{\longrightarrow} {\mathbb G}_a^2\longrightarrow 0
\ee
where $\Psi = (\Psi_1(x,y),\Psi_2(x,y))$ with 
\[\Psi_1(x,y)= x^p-\pi^{(p-1)i_1}x+\wp(\mu)\pi^{pi_1-i_2}y\] 
and
\[\Psi_2(x,y) =y^p-\pi^{(p-1)i_2}y.\] 
From (\ref{res}) we obtain the long exact sequence

\[\Hom(\D_{i_3}^*,{\mathbb G}_a^{2})
\stackrel{\Psi}{\longrightarrow} \Hom(\D_{i_3}^*,{\mathbb G}_a^{2})\stackrel{\omega}{\longrightarrow} \mathrm{Ext}^1(\D_{i_3}^*,\D_{i_1,i_2,\mu}^*)\stackrel{\iota}{\longrightarrow} \mathrm{Ext}^1(\D_{i_3}^*,{\mathbb G}_a^{2}),\]
with connecting homomorphism $\omega$, which induces the map $\rho$ in the exact sequence

\[0\longrightarrow \mathrm{coker}(\Psi: \Hom(\D_{i_3}^*,{\mathbb G}_a^{2})^{\Circlearrowleft}) \stackrel{\rho}{\longrightarrow}\mathrm{Ext}^1(\D_{i_3}^*,\D_{i_1,i_2,\mu}^*)\stackrel{\iota}{\longrightarrow} \mathrm{Ext}^1(\D_{i_3}^*,{\mathbb G}_a^{2}).\]
Tensoring with $K$ and considering kernels (indicated by $gt$) results in the following commutative diagram with exact rows and columns:

\footnotesize
\[
\begin{array}{ccccccc}
&& 0 && 0 && 0  \\
&& \downarrow && \downarrow &&\downarrow \\
0&\longrightarrow &\mathrm{coker}(\Psi: \Hom(\D_{i_3}^*,{\mathbb G}_a^{2})^{\Circlearrowleft})_{gt} &\stackrel{\rho}{\longrightarrow}&\mathrm{Ext}^1_{gt}(\D_{i_3}^*,\D_{i_1,i_2,\mu}^*)&\stackrel{\iota}{\longrightarrow} &\mathrm{Ext}^1_{gt}(\D_{i_3}^*,{\mathbb G}_a^{2}) \\
&& \downarrow && \downarrow &&\downarrow \\
0&\longrightarrow &\mathrm{coker}(\Psi: \Hom(\D_{i_3}^*,{\mathbb G}_a^{2})^{\Circlearrowleft}) &\stackrel{\rho}{\longrightarrow}&\mathrm{Ext}^1(\D_{i_3}^*,\D_{i_1,i_2,\mu}^*)&\stackrel{\iota}{\longrightarrow} &\mathrm{Ext}^1(\D_{i_3}^*,{\mathbb G}_a^{2}) \\
&& \downarrow && \downarrow &&\downarrow \\
0&\longrightarrow &\mathrm{coker}(\Psi: \Hom(\D_{i_3,K}^*,{\mathbb G}_{a,K}^{2})^{\Circlearrowleft}) &\stackrel{\rho}{\longrightarrow}&\mathrm{Ext}^1(\D_{i_3,K}^*,\D_{i_1,i_2,\mu,K}^*)&\stackrel{\iota}{\longrightarrow} &\mathrm{Ext}^1(\D_{i_3,K}^*,{\mathbb G}_{a,K}^{2}) \\
\end{array}
\]
\normalsize
Now by \cite[Lemma 4.1]{HS71},
\[\mathrm{Ext}^1_{gt}(\D_{i_3}^*,{\mathbb G}_a^{2})=(\mathrm{Ext}^1_{gt}(\D_{i_3}^*,{\mathbb G}_a))^{2}.\]
Thus
\[\mathrm{Ext}^1_{gt}(\D_{i_3}^*,{\mathbb G}_a^{2})=0\]
since $\mathrm{Ext}^1_{gt}(\D_{i_3}^*,{\mathbb G}_a)=0$ by \cite[Proposition 4.3]{EU17}.  

We have shown the following.

\begin{proposition} \label{3-iso} There is an isomorphism 
\[\rho: \mathrm{coker}(\Psi: \Hom(\D_{i_3}^*,{\mathbb G}_a^{2})^{\Circlearrowleft})_{\mathalpha{gt}}\rightarrow  
\mathrm{Ext}_{gt}^1(\D_{i_3}^*,\D_{i_1,i_2,\mu}^*).\]
\end{proposition}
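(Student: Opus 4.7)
The plan is to extract the isomorphism directly from the commutative diagram already assembled in the paragraph preceding the proposition, together with a single vanishing input. First, I would record that applying $\Hom(\D_{i_3}^*,-)$ to the faithfully flat resolution (\ref{res}) produces the four-term exact sequence
\[\Hom(\D_{i_3}^*,{\mathbb G}_a^{2}) \stackrel{\Psi}{\longrightarrow} \Hom(\D_{i_3}^*,{\mathbb G}_a^{2}) \stackrel{\omega}{\longrightarrow} \mathrm{Ext}^1(\D_{i_3}^*,\D_{i_1,i_2,\mu}^*) \stackrel{\iota}{\longrightarrow} \mathrm{Ext}^1(\D_{i_3}^*,{\mathbb G}_a^{2}),\]
and factor $\omega$ through $\mathrm{coker}(\Psi)$ to obtain the injection $\rho$ displayed in the middle row of the diagram.

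Next, I would pass to the generically trivial subfunctors, which constitute the top row of the diagram. Exactness of that row is a routine snake-lemma consequence of exactness of the columns together with the middle and bottom rows, so no additional construction is required — the injectivity of $\rho$ on the top row is inherited from its injectivity on the middle row.

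The essential remaining step is to show that $\mathrm{Ext}^1_{gt}(\D_{i_3}^*,{\mathbb G}_a^{2})=0$, which promotes the injection $\rho$ on the top row to the asserted isomorphism. I would carry this out in two pieces: first invoke \cite[Lemma 4.1]{HS71} to decompose
\[\mathrm{Ext}^1_{gt}(\D_{i_3}^*,{\mathbb G}_a^{2}) \cong (\mathrm{Ext}^1_{gt}(\D_{i_3}^*,{\mathbb G}_a))^{2},\]
and then apply \cite[Proposition 4.3]{EU17} to conclude that each factor vanishes.

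The main obstacle is bookkeeping rather than substance: one must verify that the generically trivial subfunctors behave well with respect to the long exact sequence of $\mathrm{Ext}$ and that the product decomposition ${\mathbb G}_a^{2} = {\mathbb G}_a\times {\mathbb G}_a$ survives the $gt$ restriction. Both are standard, and once they are in place, the vanishing and hence the isomorphism $\rho$ follow immediately from the cited results.
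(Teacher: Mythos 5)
Your proposal matches the paper's argument exactly: the paper assembles the same long exact sequence and commutative diagram in the paragraphs before the proposition, obtains the injection $\rho$ on the generically trivial row, and then concludes the isomorphism from the vanishing $\mathrm{Ext}^1_{gt}(\D_{i_3}^*,{\mathbb G}_a^{2})\cong(\mathrm{Ext}^1_{gt}(\D_{i_3}^*,{\mathbb G}_a))^{2}=0$, citing the same two results from Hilton--Stammbach and Elder--Underwood. No substantive difference.
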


\begin{proof}   We have $\mathrm{Ext}^1_{gt}(\D_{i_3}^*,{\mathbb G}_a^2)=0$ in the first row of the commutative diagram above.

\end{proof}

The isomorphism of Proposition \ref{3-iso} classifies all short exact sequences
\[0\longrightarrow \D_{i_1,i_2,\mu}^*\longrightarrow \D^*\longrightarrow \D_{i_3}^*\longrightarrow 0,\]
which over $K$ appear as
\[0\longrightarrow \Spec\ (K[C_p^2])^*\longrightarrow  \Spec\ (K[C_p^3])^*
\longrightarrow  \Spec\ (K[C_p])^*\longrightarrow 0.\]
We seek to compute $J$, which is the representing algebra of $\D^*$.  To do this we first need to 
compute $\mathrm{coker}(\Psi: \Hom(\D_{i_3}^*,{\mathbb G}_a^{2})^{\Circlearrowleft})_{\mathalpha{gt}}$.  
Let 
\[\F_p(\mu,-1) = \{(\alpha,\beta)\in K^2\mid \alpha=m\mu, \beta = -m, m\in \F_p\}.\]

\begin{proposition} \label{coker} The group $\mathrm{coker}(\Psi: \Hom(\D_{i_3}^*,{\mathbb G}_a^{2})^{\Circlearrowleft})_{\mathalpha{gt}}$
is isomorphic to the additive subgroup of 
\[K^2/(\F_p(\mu,-1)+(\F_p+\mathfrak{p}^{i_3-i_1},\mathfrak{p}^{i_3-i_2}))\] 
represented by pairs $(\alpha,\beta)$
which satisfy $\nu(\wp(\alpha)+\wp(\mu)\beta)\ge i_3-pi_1$ and $\nu(\wp(\beta))\ge i_3-pi_2$. 
\end{proposition}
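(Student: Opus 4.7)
The plan is to adapt the argument of \cite[Proposition 4.5]{EU17}, which handled the analogous computation in the $n=2$ setting, to the present situation where $\Psi$ is the $2\times 2$ ``upper-triangular'' system arising from $H^*_{i_1,i_2,\mu}$ rather than a single Artin-Schreier polynomial.

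First I would identify $\Hom(\D_{i_3}^*,\mathbb{G}_a^2)^{\Circlearrowleft}$ explicitly. Since $H^*_{i_3}=R[\pi^{i_3}\xi_{0,0,1}]$ has $R$-module of primitives equal to $R\cdot\pi^{i_3}\xi_{0,0,1}$, an $R$-Hopf algebra map $R[x,y]\to H^*_{i_3}$ is given by a pair $(\alpha_0,\beta_0)\in\pi^{i_3}R\times\pi^{i_3}R$ via $x\mapsto\alpha_0\xi_{0,0,1}$, $y\mapsto\beta_0\xi_{0,0,1}$, and generically this identifies the Hom with $K^2$. Using $\xi_{0,0,1}^p=\xi_{0,0,1}$, postcomposition with $\Psi$ then acts by
\[\Psi_*(\alpha_0,\beta_0)=\bigl(\alpha_0^p-\pi^{(p-1)i_1}\alpha_0+\wp(\mu)\pi^{pi_1-i_2}\beta_0,\ \beta_0^p-\pi^{(p-1)i_2}\beta_0\bigr).\]

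Next I would pin down the $gt$-cokernel by a change of variables. A class $[(\alpha_0,\beta_0)]$ in the $R$-cokernel is generically trivial iff there exist $\gamma,\delta\in K$ with $\Psi_K(\gamma,\delta)=(\alpha_0,\beta_0)$. Writing $\gamma=\pi^{i_1}\alpha$ and $\delta=\pi^{i_2}\beta$, the two defining equations collapse to
\[\alpha_0=\pi^{pi_1}\bigl(\wp(\alpha)+\wp(\mu)\beta\bigr),\qquad \beta_0=\pi^{pi_2}\wp(\beta),\]
so the integrality conditions $\alpha_0,\beta_0\in\pi^{i_3}R$ are equivalent to $\nu(\wp(\alpha)+\wp(\mu)\beta)\ge i_3-pi_1$ and $\nu(\wp(\beta))\ge i_3-pi_2$, which are exactly the conditions in the statement. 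Thus as $(\alpha_0,\beta_0)$ ranges over representatives of $gt$-classes, $(\alpha,\beta)$ ranges over pairs in $K^2$ satisfying these two valuation conditions.

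Finally I would compute the ambiguity in $(\alpha,\beta)$. The pair is determined by $(\alpha_0,\beta_0)$ only up to $\ker\Psi_K$ in the new coordinates, and one must further mod out by the coboundary image $\Psi((\pi^{i_3}R)^2)$. Using $\wp(m)=0$ and $\wp(m\mu)=m\wp(\mu)$ for $m\in\F_p$, a direct computation shows $\ker\Psi_K$ corresponds to $\F_p(\mu,-1)+\F_p(1,0)$ in $(\alpha,\beta)$-coordinates, while $\Psi((\pi^{i_3}R)^2)$ corresponds to shifts by $(\mathfrak{p}^{i_3-i_1},\mathfrak{p}^{i_3-i_2})$. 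Absorbing the $\F_p(1,0)$ summand into the first factor of the integral shifts yields the quotient by $\F_p(\mu,-1)+(\F_p+\mathfrak{p}^{i_3-i_1},\mathfrak{p}^{i_3-i_2})$ claimed in the statement. The main obstacle will be the kernel-of-$\Psi_K$ computation: one must solve $\wp(\alpha)=-m\wp(\mu)$ over $K$ for each $m\in\F_p$ and observe that this forces $\alpha+m\mu\in\F_p$, a characteristic-$p$ phenomenon that produces the $\F_p(\mu,-1)$ summand. Everything else is bookkeeping in the spirit of Propositions \ref{elder-u} and \ref{3case_i}.
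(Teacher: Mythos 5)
Your proposal is correct and follows essentially the same route as the paper: both identify the generically trivial cokernel with $(\Psi(K\otimes_R \mathcal{P}^2)\cap\mathcal{P}^2)/\Psi(\mathcal{P}^2)$ for $\mathcal{P}=R\pi^{i_3}\xi_{0,0,1}$, make the substitution $(\pi^{i_1}\alpha,\pi^{i_2}\beta)$, read off the two valuation conditions from integrality of $\Psi(\alpha\pi^{i_1}\xi_{0,0,1},\beta\pi^{i_2}\xi_{0,0,1})$, and obtain the ambiguity subgroup $\F_p(\mu,-1)+(\F_p+\mathfrak{p}^{i_3-i_1},\mathfrak{p}^{i_3-i_2})$ from $\ker\Psi_K$ together with the coboundaries $\Psi(\mathcal{P}^2)$. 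The only difference is that you spell out the kernel computation (solving $\wp(\beta)=0$ and $\wp(\alpha)=\wp(m\mu)$) that the paper leaves implicit.
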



\begin{proof}  Let ${\mathcal P}=\Prim(H_{i_3}^*)=R\pi^{i_3}\xi_{0,0,1}$ denote the set of primitive elements of 
$H_{i_3}^*$.   Then elements of  $\mathrm{coker}(\Psi: \Hom(\D_{i_3}^*,{\mathbb G}_a^{2})^{\Circlearrowleft})_{\mathalpha{gt}}$ correspond to the quotient 
\[(\Psi(K\otimes_R {\mathcal P}^{2})\cap {\mathcal P}^{2})/\Psi({\mathcal P}^{2}).\]   
We give a description of this quotient.
Elements in $K\otimes_R {\mathcal P}^2$ can be written as $(\alpha\pi^{i_1}\xi_{0,0,1},\beta\pi^{i_2}\xi_{0,0,1})$
for some $\alpha,\beta\in K$.  Thus an element in $\Psi(K\otimes_R {\mathcal P}^{2})$ appears as 

\begin{eqnarray*}
\Psi(\alpha\pi^{i_1}\xi_{0,0,1},\beta\pi^{i_2}\xi_{0,0,1}) & = &  (\Psi_1(\alpha\pi^{i_1}\xi_{0,0,1},\beta\pi^{i_2}\xi_{0,0,1}),\Psi_2(\alpha\pi^{i_1}\xi_{0,0,1},\beta\pi^{i_2}\xi_{0,0,1})) \\
& = &  (\wp(\alpha)\pi^{pi_1}\xi_{0,0,1}+\wp(\mu)\beta\pi^{pi_1}\xi_{0,0,1},\wp(\beta)\pi^{pi_2}\xi_{0,0,1}) \\
& = &  ((\wp(\alpha)+\wp(\mu)\beta)\pi^{pi_1}\xi_{0,0,1},\wp(\beta)\pi^{pi_2}\xi_{0,0,1}).
\end{eqnarray*}
The image $\Psi(\alpha\pi^{i_1}\xi_{0,0,1},\beta\pi^{i_2}\xi_{0,0,1})$ is in ${\mathcal P}^{2}$ precisely when 
$\nu(\wp(\alpha)+\wp(\mu)\beta)\ge i_3-pi_1$ and $\nu(\wp(\beta))\ge i_3-pi_2$, and $\Psi(\alpha\pi^{i_1}\xi_{0,0,1},\beta\pi^{i_2}\xi_{0,0,1})$ is in $\Psi({\mathcal P}^2)$ if and only if $(\alpha,\beta)$ is in the subgroup
\[\F_p(\mu,-1)+(\F_p+\mathfrak{p}^{i_3-i_1},\mathfrak{p}^{i_3-i_2}).\] 

\end{proof}

\begin{proposition}  \label{p^3} The representing algebra of $\D^*$ can be written in the form
\[J\cong H^*_{i_1,i_2,i_3,\mu,\alpha,\beta} = R[\pi^{i_1}(\xi_{1,0,0}-\mu\xi_{0,1,0}-\alpha\xi_{0,0,1}),\pi^{i_2}(\xi_{0,1,0}-\beta\xi_{0,0,1}),\pi^{i_3}\xi_{0,0,1}],\]
where $\mu\in K$ represents a coset in $K/(\F_p+\mathfrak{p}^{i_2-i_1})$ with 
$\nu(\wp(\mu))\ge i_2-pi_1$ and the pair $(\alpha,\beta)\in K^2$ represents a coset in 
\[K^2/(\F_p(\mu,-1)+(\F_p+\mathfrak{p}^{i_3-i_1},\mathfrak{p}^{i_3-i_2}))\] 
with $\nu(\wp(\alpha)+\wp(\mu)\beta)\ge i_3-pi_1$, $\nu(\wp(\beta))\ge i_3-pi_2$. 
\end{proposition}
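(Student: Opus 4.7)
The plan is to follow the strategy used in Proposition \ref{elder-u}(ii): combine the cohomological classification from Propositions \ref{3-iso} and \ref{coker} with a push-out construction that recovers $J$ from its sub- and quotient Hopf orders.

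Concretely, starting with $J$ fitting into the short exact sequence (\ref{ses3}), Proposition \ref{elder-u}(ii) together with \cite[Theorem 2.3]{EU17} identifies the sub-Hopf order $H \cong H_{i_3}^*$ and the quotient $H' \cong H_{i_1,i_2,\mu}^*$, producing the parameters $i_1,i_2,i_3 \ge 0$ and $\mu$. The class of (\ref{ses4}) in $\mathrm{Ext}^1_{gt}(\D_{i_3}^*,\D_{i_1,i_2,\mu}^*)$ is transported via the inverse of the isomorphism $\rho$ of Proposition \ref{3-iso} to an element of $\mathrm{coker}(\Psi:\Hom(\D_{i_3}^*,\mathbb{G}_a^2)^{\Circlearrowleft})_{gt}$, and Proposition \ref{coker} realizes this element as a pair $(\alpha, \beta) \in K^2$ satisfying $\nu(\wp(\alpha) + \wp(\mu)\beta) \ge i_3 - pi_1$ and $\nu(\wp(\beta)) \ge i_3 - pi_2$, uniquely determined modulo $\F_p(\mu,-1) + (\F_p + \mathfrak{p}^{i_3-i_1}, \mathfrak{p}^{i_3-i_2})$. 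These are exactly the six parameters appearing in $H^*_{i_1,i_2,i_3,\mu,\alpha,\beta}$.

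To realize the extension class concretely as a Hopf algebra, I would dualize the standard pullback-of-schemes construction: the cocycle $(\alpha,\beta)$ defines a morphism $\Spec\, H_{i_3}^* \to \mathbb{G}_a^2$, and $\D^*$ is the fibered product of this morphism with $\Psi:\mathbb{G}_a^2 \to \mathbb{G}_a^2$ over $\mathbb{G}_a^2$. Passing to representing algebras, the corresponding push-out is generated over $R$ by the image of $H_{i_1,i_2,\mu}^*$ (suitably twisted), a lift of the generator of $H_{i_3}^*$, and the relations forced by the cocycle. Proposition \ref{3case_i} has already verified that the three elements $\pi^{i_1}(\xi_{1,0,0} - \mu\xi_{0,1,0} - \alpha\xi_{0,0,1})$, $\pi^{i_2}(\xi_{0,1,0} - \beta\xi_{0,0,1})$, and $\pi^{i_3}\xi_{0,0,1}$ satisfy precisely the monic relations the push-out requires and generate a Hopf order whose generic fiber is $(K[C_p^3])^*$; identifying this Hopf order with the push-out gives $J \cong H^*_{i_1,i_2,i_3,\mu,\alpha,\beta}$.

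The main obstacle I expect is the bookkeeping in the push-out step: verifying that the cocycle $(\alpha,\beta)$ twists the inherited generators of $H_{i_1,i_2,\mu}^*$ by exactly $-\alpha\xi_{0,0,1}$ and $-\beta\xi_{0,0,1}$, so that the push-out lands on the claimed presentation rather than on some only isomorphic variant. Alongside this, one must check that shifting $(\alpha,\beta)$ by an element of $\F_p(\mu,-1) + (\F_p + \mathfrak{p}^{i_3-i_1}, \mathfrak{p}^{i_3-i_2})$ yields an isomorphic Hopf order, which secures the well-definedness of the parameters. Both checks reduce to unwinding the connecting homomorphism $\omega$ and the explicit description of $\mathrm{coker}(\Psi)$ from Proposition \ref{coker}.
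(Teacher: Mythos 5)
Your proposal follows essentially the same route as the paper: identify $H\cong H_{i_3}^*$ and $H'\cong H_{i_1,i_2,\mu}^*$ from the short exact sequence, transport the extension class through $\rho^{-1}$ to a pair $(\alpha,\beta)$ via Propositions \ref{3-iso} and \ref{coker}, and recover $J$ as the push-out of $f:R[x,y]\to H_{i_3}^*$ along $\Psi$. The ``bookkeeping'' you flag is exactly what the paper carries out, via the substitution $\Psi(x+\alpha\pi^{i_1}\xi_{0,0,1},\,y+\beta\pi^{i_2}\xi_{0,0,1})$ in the quotient presentation, which produces the twisted generators directly.
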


\begin{proof}   Let $[E]$ be a class in $\mathrm{Ext}_{gt}^1(\D_{i_3}^*,\D_{i_1,i_2,\mu}^*)$.   In view of the isomorphism of Proposition \ref{3-iso}, we have $[h]=\rho^{-1}([E])$ for some homomorphism 
$h: {\mathbb D}_{i_3}^*\rightarrow {\mathbb G}_a^2$.  Applying \cite[Remark 4.1]{EU17} to the resolution (\ref{res})
we conclude that the image of $h$ under the connecting homomorphism $\omega$ (inducing $\rho$) is the class 
$[{\mathbb D}_h^*]$ of the pull-back 
\[{\mathbb D}_h^*=\{(w,(x,y))\in {\mathbb D}_{i_3}^*\times {\mathbb G}_a^2:\ h(w)=\Psi(x,y)\}.\]

In view of Proposition \ref{coker}, the homomorphism $h: {\mathbb D}_{i_3}^*\rightarrow {\mathbb G}_a^2$ is determined by a Hopf algebra map $f: R[x,y]\rightarrow H_{i_3}^*$ given as
\[x\mapsto (\wp(\alpha)+\wp(\mu)\beta)\pi^{pi_1}\xi_{0,0,1},\quad y\mapsto \wp(\beta)\pi^{pi_2}\xi_{0,0,1}\]
for some $\alpha,\beta\in K$ with
$\nu(\wp(\alpha)+\wp(\mu)\beta)\ge i_3-pi_1$, $\nu(\wp(\beta))\ge i_3-pi_2$. 

The representing Hopf algebra ${\caH}_h^*$ of ${\mathbb D}_h^*$ arises from the push-out diagram

\[
\begin{array}{ccc}
{\caH}_h^*&\leftarrow&   R[x,y]\\
\uparrow&&\Psi\uparrow \\
{H}_{i_3}^*&\stackrel{f}{\leftarrow}&R[x,y]. \\
\end{array}
\]
Thus,
\begin{eqnarray*}
{\caH}_h^* & = & {R[\pi^{i_3}\xi_{0,0,1}]\otimes _R R[x,y]\over (\Psi(\alpha\pi^{i_1}\xi_{0,0,1},\beta\pi^{i_2}\xi_{0,0,1})\otimes 1+1\otimes \Psi(x,y))} \\
& \cong &  {R[\pi^{i_3}\xi_{0,0,1}][x,y]\over (\Psi(x,y)+\Psi(\alpha\pi^{i_1}\xi_{0,0,1},\beta\pi^{i_2}\xi_{0,0,1}))} \\
& \cong &  {R[\pi^{i_3}\xi_{0,0,1}][x,y]\over (\Psi(x+\alpha\pi^{i_1}\xi_{0,0,1},y+\beta\pi^{i_2}\xi_{0,0,1}))} \\
& \cong &  R[\pi^{i_1}(\xi_{1,0,0}-\mu\xi_{0,1,0}-\alpha\xi_{0,0,1}),\pi^{i_2}(\xi_{0,1,0}-\beta\xi_{0,0,1}),\pi^{i_3}\xi_{0,0,1}],
\end{eqnarray*}
since
\[H_{i_1,i_2,\mu}^*\cong R[x,y]/(\Psi_1(x,y),\Psi_2(x,y)).\]

\end{proof}



\begin{remark}  \label{mu-cond} Let $H^*_{i_1,i_2,i_3,\mu,\alpha,\beta}$ be an $R$-Hopf order in $(K[C_p^3])^*$.
Assume \cite[Convention 1]{EU17}, i.e., if $\mu\in \F_p+\mathfrak{p}^{i_2-i_1}$, then $\mu=0$.  
Then it is natural to assume that 
\[\nu(\mu)\ge i_3-i_1\quad \hbox{and}\quad i_2\ge i_3.\]   
The motivation behind this assumption is as follows.  The pair $(\alpha,\beta)=(0,0)$ corresponds to the Hopf order
\[H^*_{i_1,i_2,i_3,\mu,0,0} = R[\pi^{i_1}(\xi_{1,0,0}-\mu\xi_{0,1,0}),\pi^{i_2}\xi_{0,1,0},\pi^{i_3}\xi_{0,0,1}]\]
and the pair $(\alpha,\beta)=(m\mu,-m)$, $m\in \F_p$, in $\F_p(\mu,-1)$ corresponds to the Hopf order

\begin{eqnarray*}
H^*_{i_1,i_2,i_3,\mu,m\mu,-m} & = & R[\pi^{i_1}(\xi_{1,0,0}-\mu\xi_{0,1,0}-m\mu\xi_{0,0,1}),\pi^{i_2}(\xi_{0,1,0}
+m\xi_{0,0,1}),\pi^{i_3}\xi_{0,0,1}] \\
& = &  R[\pi^{i_1}(\xi_{1,0,0}-\mu(\xi_{0,1,0}+m\xi_{0,0,1})),
\pi^{i_2}(\xi_{0,1,0}+m\xi_{0,0,1}),\pi^{i_3}\xi_{0,0,1}]. \\
\end{eqnarray*}
Since 
\[(\xi_{0,1,0}+m\xi_{0,0,1})^p=\xi_{0,1,0}+m\xi_{0,0,1},\]
and $\xi_{0,1,0}+m\xi_{0,0,1}$ is primitive, 
\[H^*_{i_1,i_2,i_3,\mu,0,0}\cong H^*_{i_1,i_2,i_3,\mu,m\mu,-m}\]
as $R$-Hopf algebras.  Thus the pair $(\alpha,\beta)=(\mu m,-m)$ corresponds to the trivial extension and so 
we assume that 

\[(\mu m,-m)\in (\F_p+\mathfrak{p}^{i_3-i_1},\mathfrak{p}^{i_3-i_2}),\]
Hence, $\nu(\mu)\ge i_3-i_1$ and $i_2\ge i_3$. 

\end{remark}


\section{Hopf orders in $K[C_p^n]$, $n=1,2,3$}

In the previous sections we have classified Hopf orders $J$ in $(K[C_p^n])^*$ for the cases $n=1,2,3$. To construct 
Hopf orders in $K[C_p^n]$, $n=1,2,3$, we need to compute the linear duals $J^*$.   

\subsection{The case $n=1$}  Let $C_p=\langle g_1\rangle$, let $i\ge 0$ be an integer, and let 
\[E(i)= R\left [{g_1-1\over \pi^i}\right ].\] 
Then it is well-known that $E(i)$ is an 
$R$-Hopf order in $K[C_p]$ (see for instance \cite[\S 2]{EU17}).

\begin{proposition} [Elder-U.]    Let $i\ge 0$ be an integer and let $H_i^*=R[\pi^i\xi_1]$, $\xi_1$ primitive, $\xi_1^p=\xi_1$,
be an $R$-Hopf order in $(K[C_p])^*$.  Then
\[(H_i^*)^*=E(i).\]
\end{proposition}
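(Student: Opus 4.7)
The plan is to verify that the duality pairing $\langle\cdot,\cdot\rangle\colon (K[C_p])^*\times K[C_p]\to K$ restricts to a perfect $R$-bilinear pairing on $H_i^*\times E(i)$; once this is established, $E(i)$ and $(H_i^*)^*$ must coincide as $R$-submodules of $K[C_p]$.

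I would start by fixing $R$-bases. For $H_i^*$, the relation $\xi_1^p=\xi_1$ gives $(\pi^i\xi_1)^p = \pi^{(p-1)i}(\pi^i\xi_1)$, so $\{(\pi^i\xi_1)^m : 0\le m\le p-1\}$ is a free basis. For $E(i)$, set $\gamma=(g_1-1)/\pi^i$; since $(g_1-1)^p = g_1^p-1 = 0$ in characteristic $p$, we have $\gamma^p = 0$ and the obvious rank-$p$ basis $\{\gamma^n : 0\le n\le p-1\}$.

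The heart of the argument is the pairing formula. Using that $\xi_1$ is primitive and that $\langle \xi_1, g_1\rangle = \langle \xi_1,1\rangle + \langle \xi_1,g_1-1\rangle = 0+1 = 1$, a short induction on $k$ (via $\Delta\xi_1 = 1\otimes \xi_1 + \xi_1\otimes 1$) yields $\langle \xi_1, g_1^k\rangle = k$, and a further induction on $m$ using $\langle ab,c\rangle = \langle a\otimes b,\Delta c\rangle$ together with $\Delta(g_1^k) = g_1^k\otimes g_1^k$ gives $\langle \xi_1^m,g_1^k\rangle = k^m$. Expanding $(g_1-1)^n$ by the binomial theorem then produces the classical Stirling identity
\[
\langle \xi_1^m,(g_1-1)^n\rangle \;=\; \sum_{k=0}^n (-1)^{n-k}\binom{n}{k}k^m \;=\; n!\,S(m,n),
\]
which vanishes for $n>m$ and equals $n!$ when $n=m$.

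Combining these, the pairing matrix in the chosen bases is $A_{m,n} = \pi^{i(m-n)}\,n!\,S(m,n)$ for $0\le m,n\le p-1$. It is lower triangular with diagonal entries $n!$; each $n!$ is a unit in $R$ since $0\le n\le p-1$ and $\chr K = p$, while the sub-diagonal entries lie in $R$ because the exponent $i(m-n)$ is nonnegative there. Hence $A\in\mathrm{GL}_p(R)$, so the pairing is perfect, the induced map $E(i)\to \mathrm{Hom}_R(H_i^*,R)=(H_i^*)^*$ is an $R$-module isomorphism of submodules of $K[C_p]$, and $(H_i^*)^* = E(i)$. The main obstacle is the Hopf-theoretic identity $\langle \xi_1^m,g_1^k\rangle = k^m$ and its Stirling expansion; once those are in hand, triangularity together with the invertibility of $n!$ modulo $p$ for $n<p$ completes the proof.
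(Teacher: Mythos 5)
Your argument is correct, and it is genuinely different from what the paper does: the paper offers no proof at all here, simply citing \cite[Proposition 2.2]{EU17}, and when the author does prove the analogous duality statement at rank $p^3$ (Proposition \ref{3duals}) the route is one containment plus a comparison of discriminants via the tower formula, not a perfect-pairing computation. Your approach instead exhibits the full Gram matrix $A_{m,n}=\pi^{i(m-n)}\,n!\,S(m,n)$ of the pairing in the monomial bases $\{(\pi^i\xi_1)^m\}$ and $\{((g_1-1)/\pi^i)^n\}$, observes it is triangular with the units $n!$ ($0\le n\le p-1$, $\mathrm{char}\,K=p$) on the diagonal and $R$-entries below, and concludes $A\in\mathrm{GL}_p(R)$, so that the pairing is unimodular and $E(i)$ is literally all of $(H_i^*)^*$ rather than merely contained in it. The key identities you need --- $\langle\xi_1,g_1^k\rangle=k$ from primitivity, $\langle\xi_1^m,g_1^k\rangle=k^m$ from grouplikeness of $g_1^k$, and the Stirling expansion $\langle\xi_1^m,(g_1-1)^n\rangle=n!\,S(m,n)$ --- are all standard and correctly deployed; the vanishing for $n>m$ gives triangularity and $S(n,n)=1$ gives the diagonal. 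What your method buys is a self-contained, explicit proof that simultaneously establishes both the containment and the equality without any discriminant bookkeeping; what the paper's (deferred) containment-plus-discriminant method buys is that it scales more easily to the composite cases like Proposition \ref{3duals}, where writing down and triangularizing the full $p^3\times p^3$ pairing matrix would be unwieldy.
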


\begin{proof}  See \cite[Proposition 2.2]{EU17}.
\end{proof}

So every $R$-Hopf order in $K[C_p]$ can be written in the form
\[E(i)=R\left [{g_1-1\over \pi^i}\right ]\] 
for some integer $i\ge 0$. 
\subsection{The case $n=2$}   For $x,y$ the {\em truncated exponential}\index{truncated exponential} is defined as 
\[x^{[y]} =\sum_{m=0}^{p-1} {y\choose m} (x-1)^m,\] 
where ${y\choose m}$ is the {\em generalized binomial coefficient}\index{generalized binomial coefficient}
\[{y\choose m} =y(y-1)(y-2)\cdots (y-m+1)/m!\]  

\begin{proposition} [Elder-U.] Let $C_p^2=\langle g_1,g_2\rangle$, let $i_1,i_2\ge 0$ be integers and let $\mu$ be an element of $K$ that satisfies $\nu(\wp(\mu))\ge i_2-pi_1$.  Let 
\[E(i_1,i_2,\mu) = R\left[\frac{g_1-1}{\pi^{i_1}},\frac{g_2g_1^{[\mu]}-1}{\pi^{i_2}}\right].\]
Then $E(i_1,i_2,\mu)$ is an $R$-Hopf order in $K[C_p^2]$. 
\end{proposition}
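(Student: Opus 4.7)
The plan is to verify directly that $E(i_1,i_2,\mu)$ satisfies the three defining properties of an $R$-Hopf order in $K[C_p^2]$: it is a free $R$-submodule, its generic fiber is all of $K[C_p^2]$, and it is closed under the coproduct, counit and antipode inherited from $K[C_p^2]$. The technical backbone is the claim that, even for $\mu\notin\F_p$, the truncated exponential $g_1^{[\mu]}$ satisfies
\[g_1^{[\mu]}g_1^{[\nu]}=g_1^{[\mu+\nu]},\qquad(g_1^{[\mu]})^p=1,\qquad\Delta(g_1^{[\mu]})=g_1^{[\mu]}\otimes g_1^{[\mu]},\]
i.e.\ $g_1^{[\mu]}$ behaves like a grouplike element of $K[C_p^2]$.

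First I would establish these three identities; all rest on the characteristic-$p$ nilpotence $(g_1-1)^p=g_1^p-1=0$. Multiplicativity follows from the Vandermonde identity $\binom{\mu+\nu}{m}=\sum_{k+l=m}\binom{\mu}{k}\binom{\nu}{l}$, because every term with $k+l\ge p$ is killed by $(g_1-1)^{k+l}=0$. Iterating yields $(g_1^{[\mu]})^p=g_1^{[p\mu]}$, and since every generalized binomial $\binom{p\mu}{m}$ with $1\le m\le p-1$ contains the factor $p\mu=0$, we get $g_1^{[p\mu]}=1$. For the coproduct I would factor $g_1\otimes g_1=(1+(g_1-1)\otimes 1)(1+1\otimes(g_1-1))$ in $K[C_p^2]\otimes K[C_p^2]$; the two factors commute and are individually nilpotent of order $p$, so the same Vandermonde argument applied to this commuting pair yields $(g_1\otimes g_1)^{[\mu]}=(g_1^{[\mu]}\otimes 1)(1\otimes g_1^{[\mu]})=g_1^{[\mu]}\otimes g_1^{[\mu]}$, and expanding $\Delta$ on the defining sum of $g_1^{[\mu]}$ shows this coincides with $\Delta(g_1^{[\mu]})$.

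Once these identities are in place, the rest is bookkeeping. Set $G=g_2g_1^{[\mu]}$; then $G$ is grouplike and $G^p=1$, so $e_1=(g_1-1)/\pi^{i_1}$ and $e_2=(G-1)/\pi^{i_2}$ satisfy the monic integral relations $e_j^p=0$ for $j=1,2$. Since $g_1^{[\mu]}$ is a unit in $K[g_1]$, one has $K[g_1,G]=K[g_1,g_2]=K[C_p^2]$, hence $\{g_1^aG^b:0\le a,b\le p-1\}$ is a $K$-basis of $K[C_p^2]$; a triangular change of basis promotes $\{e_1^ae_2^b\}$ to an $R$-basis of $E(i_1,i_2,\mu)$, simultaneously verifying $R$-freeness of rank $p^2$ and the identification $K\otimes_RE(i_1,i_2,\mu)=K[C_p^2]$. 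Closure under the Hopf structure then follows from the grouplikeness of $g_1$ and $G$ via
\[\Delta(e_j)=e_j\otimes 1+1\otimes e_j+\pi^{i_j}\,e_j\otimes e_j,\quad\varepsilon(e_j)=0,\quad S(e_j)=-(1+\pi^{i_j}e_j)^{-1}e_j,\]
each of which lies in $E(i_1,i_2,\mu)$ or $E(i_1,i_2,\mu)\otimes_R E(i_1,i_2,\mu)$ (the inverse is expanded as a polynomial in $e_j$ using $e_j^p=0$).

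The main obstacle is the identity $\Delta(g_1^{[\mu]})=g_1^{[\mu]}\otimes g_1^{[\mu]}$ for $\mu\in K\setminus\F_p$: this is where the behavior of a non-integer scalar enters non-trivially, and where the characteristic-$p$ nilpotence does the decisive work of collapsing what would otherwise be a divergent power series into a finite polynomial identity. The hypothesis $\nu(\wp(\mu))\ge i_2-pi_1$ does not enter this direct verification, but it is the compatibility condition inherited from Proposition~\ref{elder-u} under which $E(i_1,i_2,\mu)$ is the linear dual of the Hopf order $H^*_{i_1,i_2,\mu}$; an alternative (essentially equivalent) proof would compute $(H^*_{i_1,i_2,\mu})^*$ directly from the duality pairing and observe that it coincides with $E(i_1,i_2,\mu)$.
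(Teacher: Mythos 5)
Your argument breaks down at the step you yourself identify as the crux: the claim that $g_1^{[\mu]}$ is grouplike for arbitrary $\mu\in K$. The single-variable Vandermonde argument is fine (it gives $g_1^{[\mu]}g_1^{[\nu]}=g_1^{[\mu+\nu]}$ and $(g_1^{[\mu]})^p=1$, because there every term of total degree $\ge p$ in the single nilpotent $g_1-1$ dies), but it does not transfer to the commuting pair $x=(g_1-1)\otimes 1$, $y=1\otimes(g_1-1)$: the monomials $x^ky^l$ with $k,l\le p-1$ but $k+l\ge p$ do \emph{not} vanish in $K[C_p^2]\otimes K[C_p^2]$, and they are exactly where $(1+x)^{[\mu]}(1+y)^{[\mu]}$ and $((1+x)(1+y))^{[\mu]}$ disagree. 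The correct statement is the Byott--Elder identity quoted in the paper as (\ref{griff1}),
\[(1+x+y+xy)^{[z]}=(1+x)^{[z]}(1+y)^{[z]}\bigl(1+\wp(z)Q(x,y)\bigr),\qquad Q(x,y)\in(x,y)^p,\]
so that $\Delta(g_1^{[\mu]})=(g_1^{[\mu]}\otimes g_1^{[\mu]})(1+\wp(\mu)Q(x,y))$, and $g_1^{[\mu]}$ is grouplike only when $\wp(\mu)=0$, i.e.\ $\mu\in\F_p$. (For $p=2$: $(1+x)^{[\mu]}(1+y)^{[\mu]}=1+\mu x+\mu y+\mu^2xy$ while $(1+x+y+xy)^{[\mu]}=1+\mu x+\mu y+\mu xy$.) Consequently your formula $\Delta(e_2)=e_2\otimes1+1\otimes e_2+\pi^{i_2}e_2\otimes e_2$ acquires the extra term $\pi^{-i_2}(G\otimes G)\wp(\mu)Q(\pi^{i_1}X,\pi^{i_1}Y)$, and it is precisely the hypothesis $\nu(\wp(\mu))\ge i_2-pi_1$ (together with $Q\in(x,y)^p$, which supplies a factor $\pi^{pi_1}$) that forces this term into $\pi^{i_2}E(i_1)\otimes E(i_1)$ and hence $\Delta(e_2)$ into $E\otimes_R E$. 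The fact that your verification never uses the hypothesis is the telltale sign: without it the statement is false (e.g.\ $p=2$, $i_1=0$, $i_2=1$, $\nu(\wp(\mu))=0$ gives $\Delta(e_2)\notin E\otimes E$).

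The algebraic half of your argument (the relations $e_j^p=0$, the triangular basis $\{e_1^ae_2^b\}$, $R$-freeness, and $K\otimes_RE=K[C_p^2]$) is sound, and the antipode closure does follow once the coproduct closure is in place. But the coalgebra closure is the entire content of the proposition, and it requires the $\wp(\mu)Q(x,y)$ bookkeeping sketched above; this is exactly the computation carried out in the cited proof \cite[Proposition 3.4]{EU17}, and it is the same mechanism the paper runs in the proof of Proposition \ref{p^3dual} one level up.
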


\begin{proof}  See \cite[Proposition 3.4]{EU17}. 
\end{proof}

\begin{proposition} [Elder-U.]  Let $i_1,i_2\ge 0$ be integers and let $\mu$ be an element of $K$ that satisfies $\nu(\wp(\mu))\ge i_2-pi_1$. Let
\[H_{i_1,i_2,\mu}^*=R[\pi^{i_1}(\xi_{1,0}-\mu\xi_{0,1}), \pi^{i_2}\xi_{0,1}]\]
be an $R$-Hopf order in $(K[C_p^2])^*$.  Then
\[(H_{i_1,i_2,\mu}^*)^*=E(i_1,i_2,\mu).\]
\end{proposition}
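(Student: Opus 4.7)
The plan is to prove $(H_{i_1,i_2,\mu}^*)^* = E(i_1,i_2,\mu)$ in two stages. First I would establish the containment $E(i_1,i_2,\mu) \subseteq (H_{i_1,i_2,\mu}^*)^*$ by a direct computation of the duality pairing on generators. Second I would promote containment to equality using the short exact sequence structure of Proposition \ref{elder-u} dualized back to $K[C_p^2]$.

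For the containment, write $X_1 = \pi^{i_1}(\xi_{1,0}-\mu\xi_{0,1})$, $X_2 = \pi^{i_2}\xi_{0,1}$, $U_1 = (g_1-1)/\pi^{i_1}$, and $U_2 = (g_2g_1^{[\mu]}-1)/\pi^{i_2}$. Expanding via $g_1^{[\mu]} = \sum_{m=0}^{p-1}\binom{\mu}{m}(g_1-1)^m$ gives
\[g_2g_1^{[\mu]}-1 = (g_2-1) + \mu(g_1-1) + (\text{terms of total degree }\ge 2\text{ in }(g_1-1),(g_2-1)),\]
and the defining duality $\langle\xi_{1,0},(g_1-1)\rangle = \langle\xi_{0,1},(g_2-1)\rangle = 1$ (with the other degree-one pairings zero) produces the $2\times 2$ identity matrix for $\langle X_r,U_s\rangle$. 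Invoking the Hopf pairing axiom $\langle ab,c\rangle = \langle a\otimes b,\Delta(c)\rangle$, primitivity of $X_1,X_2$, and closure $\Delta(E(i_1,i_2,\mu))\subseteq E(i_1,i_2,\mu)\otimes E(i_1,i_2,\mu)$ (it is a Hopf order by the preceding proposition), a straightforward induction on $a_1+a_2$ shows $\langle X_1^{a_1}X_2^{a_2},U_1^{b_1}U_2^{b_2}\rangle\in R$ for all $0\le a_r,b_r\le p-1$. The base case reduces, by primitivity, to $\langle X_1,U_1^{b_1}U_2^{b_2}\rangle=\delta_{b_1,1}\delta_{b_2,0}$ and similarly for $X_2$. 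This gives $E(i_1,i_2,\mu)\subseteq (H_{i_1,i_2,\mu}^*)^*$.

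For equality, recall the short exact sequence from the proof of Proposition \ref{elder-u},
\[R\to H_{i_2}^*\to H_{i_1,i_2,\mu}^*\to H_{i_1}^*\to R,\]
which dualizes, via the $n=1$ case $(H_i^*)^* = E(i)$, to
\[R\to E(i_1)\to (H_{i_1,i_2,\mu}^*)^*\to E(i_2)\to R.\]
Meanwhile $E(i_1,i_2,\mu)$ contains the sub-Hopf-order $R[U_1]\cong E(i_1)$, and modding out the ideal it generates sends $g_1\equiv 1$, hence $g_1^{[\mu]}\equiv 1$, giving quotient $R[(g_2-1)/\pi^{i_2}]\cong E(i_2)$. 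Thus $E(i_1,i_2,\mu)$ fits into the same short exact sequence with identical sub $E(i_1)$ and quotient $E(i_2)$. Combined with the established inclusion, the five-lemma applied to the two sequences forces the inclusion to be an isomorphism.

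The main obstacle is the compatibility check required by the five-lemma step, namely that the inclusion $E(i_1,i_2,\mu)\hookrightarrow (H_{i_1,i_2,\mu}^*)^*$ restricts to the identity on the common sub $E(i_1)$ and descends to the identity on the common quotient $E(i_2)$. Equivalently, both Hopf orders must realize the same extension class parametrized by $\mu$ in $\mathrm{Ext}^1_{gt}$. The first step's pairing computation is engineered precisely to handle this: $U_1$ is dual to the quotient generator of $H_{i_1,i_2,\mu}^*$ (so $R[U_1]$ matches the sub of $(H_{i_1,i_2,\mu}^*)^*$), and $U_2$ is dual to the sub generator (matching the quotients).
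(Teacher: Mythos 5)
Your first stage (the containment $E(i_1,i_2,\mu)\subseteq (H_{i_1,i_2,\mu}^*)^*$ via the pairing of monomial bases, using primitivity of the $X_r$ and $\Delta(E)\subseteq E\otimes E$ to reduce to the degree-one pairings) is correct and is essentially the same computation the paper carries out in its own $n=3$ analogue (Proposition \ref{3duals}), where one checks $\langle \pi^{i_1}(\xi_{1,0,0}-\mu\xi_{0,1,0}-\alpha\xi_{0,0,1}),\rho_{a,b,c}\rangle=\delta_{a,1}\delta_{b,0}\delta_{c,0}$, etc. The divergence, and the gap, is in how you promote containment to equality. A short exact sequence of $R$-Hopf algebras $R\to A\to B\to C\to R$ is \emph{not} an exact sequence of $R$-modules (the "kernel" of $B\to C$ is $BA^+$, not the image of $A$), and the category of finite flat group schemes over a discrete valuation ring is not abelian. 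So "apply the five-lemma to the two sequences" is not a legitimate step as written: the lemma you are invoking has no hypothesis it can latch onto. Concretely, if you try to linearize the two sequences as $0\to U_1E\to E\to E(i_2)\to 0$ and $0\to U_1F\to F\to E(i_2)\to 0$ with $F=(H_{i_1,i_2,\mu}^*)^*$, the five-lemma would require surjectivity of $U_1E\to U_1F$, which is essentially the statement $E=F$ you are trying to prove; note $U_1$ is nilpotent in characteristic $p$, so you cannot cancel it.

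The step is repairable, but it needs an actual argument. The route the paper takes (both in \cite[Theorem 3.6]{EU17} and in its own Proposition \ref{3duals}) is to observe that $E$ and $F$ are free $R$-modules of the same rank $p^2$ with $E\subseteq F$, and that such an inclusion is an equality if and only if $\disc(E)=\disc(F)$; the discriminants are then computed from the two short exact sequences via the multiplicativity formula $\disc(B)=\disc(A)^{\mathrm{rk}(C)}\disc(C)^{\mathrm{rk}(A)}$ of \cite[(22.17)]{TWE}, giving $(\pi^{p^2(p-1)(i_1+i_2)})$ on both sides. Alternatively, your extension-comparison idea can be salvaged by a Nakayama argument: $F$ is a finitely generated module over the local ring $E(i_1)=R[U_1]/(U_1^p)$, the inclusion $E\hookrightarrow F$ is $E(i_1)$-linear and surjective modulo the augmentation ideal $(U_1)$ (this is your compatibility check on the common quotient $E(i_2)$), and $(U_1)$ lies in the Jacobson radical because $U_1$ is nilpotent, so $E\to F$ is surjective. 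Either fix works, but one of them must be supplied; as it stands the proposal's second half does not close.
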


\begin{proof}  See \cite[Theorem 3.6]{EU17}.
\end{proof}

Thus an arbitrary $R$-Hopf order in $K[C_p^2]$ appears as
\[E(i_1,i_2,\mu) = R\left[\frac{g_1-1}{\pi^{i_1}},\frac{g_2g_1^{[\mu]}-1}{\pi^{i_2}}\right]\]
for integers $i_1,i_2\ge 0$ and $\mu\in K$ with $\nu(\wp(\mu))\ge i_2-pi_1$.

\subsection{The case $n=3$}  Let $K[C_p^3]=K\langle g_1,g_2,g_3\rangle$.  Let 
\[E(i_1,i_2,\mu) = R\left [{g_1-1\over\pi^{i_1}},{g_2g_1^{[\mu]}-1\over\pi^{i_2}}\right ]\]
be an $R$-Hopf order in $K[C_p^2]$.  Let $i_3\ge 0$, let $\alpha,\beta\in K$ and let

\[E(i_1,i_2,i_3,\alpha,\beta,\mu)=R\left [{g_1-1\over\pi^{i_1}},{g_2g_1^{[\mu]}-1\over\pi^{i_2}},
{g_3g_1^{[\alpha]}(g_2g_1^{[\mu]})^{[\beta]}-1\over \pi^{i_3}}\right ]\]
be a ``truncated exponential" algebra over $R$.  We want to find conditions so that $E(i_1,i_2,i_3,\alpha,\beta,\mu)$ is an $R$-Hopf order in $K[C_p^3]$.  We first prove some lemmas.

\begin{lemma}\label{unit-lem}  Assume the conditions $\nu(\wp(\alpha)+\wp(\mu)\beta)\ge i_3-pi_1$, $\nu(\wp(\beta))\ge i_3-pi_2$.  Then $g_1^{[\alpha]}(g_2g_1^{[\mu]})^{[\beta]}$ is a unit in $E(i_1,i_2,\mu)$. 
\end{lemma}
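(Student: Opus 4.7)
The plan is to first extract the valuation bounds $\nu(\alpha)\ge -i_1$ and $\nu(\beta)\ge -i_2$ from the hypotheses, use these bounds to place both $g_1^{[\alpha]}(g_2g_1^{[\mu]})^{[\beta]}$ and its candidate inverse $g_1^{[-\alpha]}(g_2g_1^{[\mu]})^{[-\beta]}$ inside $E(i_1,i_2,\mu)$, and then invoke the identity $h^{[y]}h^{[-y]}=1$ for truncated exponentials of $p$-torsion grouplike elements to close the argument.

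For the valuation bounds, I would start from $\nu(\wp(\beta))\ge i_3-pi_2\ge -pi_2$. Since $\wp(\beta)=\beta^p-\beta$ has valuation $p\nu(\beta)$ whenever $\nu(\beta)<0$, this immediately yields $\nu(\beta)\ge -i_2$. Combined with the compatibility condition $\nu(\wp(\mu))\ge i_2-pi_1$ required for $E(i_1,i_2,\mu)$ to be a Hopf order, we get $\nu(\wp(\mu)\beta)\ge -pi_1$. Applying the ultrametric inequality to $\nu(\wp(\alpha)+\wp(\mu)\beta)\ge i_3-pi_1\ge -pi_1$ then forces $\nu(\wp(\alpha))\ge -pi_1$, and the same analysis of $\wp$ used for $\beta$ gives $\nu(\alpha)\ge -i_1$.

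To see that $g_1^{[\alpha]}\in E(i_1,i_2,\mu)$, I would expand
\[g_1^{[\alpha]}=\sum_{m=0}^{p-1}\binom{\alpha}{m}\pi^{mi_1}\left(\frac{g_1-1}{\pi^{i_1}}\right)^m.\]
For $m<p$ the factorial $m!$ is a unit in $R$, and each of the $m$ factors $\alpha-j$ in the numerator of $\binom{\alpha}{m}$ has valuation $\ge\min(\nu(\alpha),0)\ge -i_1$, so $\nu\bigl(\binom{\alpha}{m}\pi^{mi_1}\bigr)\ge 0$, placing $g_1^{[\alpha]}$ inside $R[(g_1-1)/\pi^{i_1}]\subseteq E(i_1,i_2,\mu)$. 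The identical computation with $\nu(\beta)\ge -i_2$ and the generator $(g_2g_1^{[\mu]}-1)/\pi^{i_2}$ places $(g_2g_1^{[\mu]})^{[\beta]}$ in $E(i_1,i_2,\mu)$, and since $-\alpha,-\beta$ satisfy the same bounds, $g_1^{[-\alpha]}$ and $(g_2g_1^{[\mu]})^{[-\beta]}$ lie in $E(i_1,i_2,\mu)$ as well.

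Finally, since $\chr K=p$ we have $(g_1-1)^p=g_1^p-1=0$, and because $g_1^{[\mu]}$ is grouplike with $(g_1^{[\mu]})^p=g_1^{[p\mu]}=1$, the product $g_2g_1^{[\mu]}$ is grouplike of order $p$, so $(g_2g_1^{[\mu]}-1)^p=0$ as well. In either nilpotent setting Vandermonde's identity gives the exponential law $h^{[y]}h^{[z]}=h^{[y+z]}$, so taking $z=-y$ yields $h^{[y]}h^{[-y]}=h^{[0]}=1$. Since the two factors commute in $K[C_p^2]$, this shows $g_1^{[-\alpha]}(g_2g_1^{[\mu]})^{[-\beta]}$ is a two-sided inverse of $g_1^{[\alpha]}(g_2g_1^{[\mu]})^{[\beta]}$ inside $E(i_1,i_2,\mu)$, proving the latter is a unit. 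The principal obstacle is the first step: the bound on $\nu(\alpha)$ must be extracted indirectly, tolerating possible cancellation in $\wp(\alpha)+\wp(\mu)\beta$ rather than bounding each summand individually.
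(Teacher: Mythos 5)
Your proof is correct and follows the same overall strategy as the paper: exhibit an explicit inverse and verify that both factors lie in $E(i_1,i_2,\mu)$ by bounding the valuations of the binomial coefficients $\binom{\alpha}{m}\pi^{mi_1}$ and $\binom{\beta}{m}\pi^{mi_2}$. Where you genuinely improve on the paper is the bound $\nu(\alpha)\ge -i_1$: the paper splits into cases according to whether $\nu(\wp(\alpha))=\nu(\wp(\mu)\beta)$ and, within that, on the sign of $\nu(\beta)$, whereas you first establish $\nu(\beta)\ge -i_2$, deduce $\nu(\wp(\mu)\beta)\ge -pi_1$ from the standing hypothesis $\nu(\wp(\mu))\ge i_2-pi_1$, and then get $\nu(\wp(\alpha))\ge -pi_1$ in one ultrametric step that tolerates cancellation; this is cleaner and covers all cases uniformly. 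Your inverse $g_1^{[-\alpha]}(g_2g_1^{[\mu]})^{[-\beta]}$ also differs from the paper's cited form $g_1^{[-\alpha]}(g_2g_1^{[-\mu]})^{[-\beta]}$, but it is justified directly by the Vandermonde law $h^{[y]}h^{[z]}=h^{[y+z]}$ for elements with $(h-1)^p=0$, which is sound. One small inaccuracy: $g_1^{[\mu]}$ (hence $g_2g_1^{[\mu]}$) is \emph{not} grouplike unless $\mu\in\F_p$ --- comultiplication produces the correction term $\wp(\mu)Q(x,y)$ of formula (\ref{griff1}). This does not damage your argument, since all you actually need is $(g_2g_1^{[\mu]})^p=1$, which follows from $(g_1^{[\mu]})^p=g_1^{[p\mu]}=1$, $g_2^p=1$, and commutativity, giving $(g_2g_1^{[\mu]}-1)^p=0$ in characteristic $p$ as claimed.
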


\begin{proof}  By \cite[(2)]{EU17},
\[(g_1^{[\alpha]}(g_2g_1^{[\mu]})^{[\beta]})(g_1^{[-\alpha]}(g_2g_1^{[-\mu]})^{[-\beta]}) = 1,\]
so it is a matter of checking that $g_1^{[\alpha]}(g_2g_1^{[\mu]})^{[\beta]}$ and $g_1^{[-\alpha]}(g_2g_1^{[-\mu]})^{[-\beta]}$ are in $E(i_1,i_2,\mu)$.   We show that $g_1^{[\alpha]}(g_2g_1^{[\mu]})^{[\beta]}\in E(i_1,i_2,\mu)$.  Now, 
\[(g_2g_1^{[\mu]})^{[\beta]}=\sum_{m=0}^{p-1} {\beta\choose m}\pi^{mi_2}
\left ({g_2g_1^{[\mu]}-1\over \pi^{i_2}}\right )^m.\]
We claim that ${\beta\choose m}\pi^{mi_2}\in R$ for $0\le m\le p-1$. 
If $\nu(\beta)\ge 0$, then clearly this holds.  So we assume that
$\nu(\beta)<0$, which yields $\nu({\beta\choose m})\ge -mi_2$, $0\le m\le p-1$.  Hence
$(g_2g_1^{[\mu]})^{[\beta]}\in E(i_1,i_2,\mu)$. 

We next show that $g_1^{[\alpha]}\in E(i_1,i_2,\mu)$.  We have
\[g_1^{[\alpha]} =\sum_{m=0}^{p-1} {\alpha\choose m}\pi^{mi_1}
\left ({g_1-1\over \pi^{i_1}}\right )^m.\]
We claim that ${\alpha\choose m}\pi^{mi_1}\in R$ for $0\le m\le p-1$.   If $\nu(\alpha)\ge 0$, then
clearly this holds.  So we assume that
$\nu(\alpha)<0$, so that $\nu(\wp(\alpha))=p\nu(\alpha)$.    If $\nu(\wp(\alpha))\not = \nu(\wp(\mu)\beta)$, then 
$p\nu(\alpha)\ge\nu(\wp(\alpha)+\wp(\mu)\beta)\ge i_3-pi_1$, thus $\nu(\alpha)\ge -i_1$, which yields
$\nu({\alpha\choose m})\ge -mi_1$, $0\le m\le p-1$.   So we assume that $\nu(\wp(\alpha)) = p\nu(\alpha)=\nu(\wp(\mu)\beta)$.  If $\nu(\beta)\ge 0$, then
\[p\nu(\alpha)=\nu(\wp(\mu)\beta) =  \nu(\wp(\mu))+\nu(\beta)\ge \nu(\wp(\mu))\ge i_2-pi_1,\]
thus $\nu({\alpha\choose m})\ge -mi_1$, $0\le m\le p-1$.   If $\nu(\beta)<0$, then $\nu(\beta)\ge i_3/p-i_2$ and so
\[p\nu(\alpha)=\nu(\wp(\mu))+\nu(\beta)\ge i_2-pi_1+i_3/p-i_2=i_3/p-pi_1,\]
thus $\nu({\alpha\choose m})\ge -mi_1$, $0\le m\le p-1$.

A similar argument shows that $g_1^{[-\alpha]}(g_2g_1^{[-\mu]})^{[-\beta]}\in E(i_1,i_2,\mu)$.

\end{proof}

We have the formula from \cite[Lemma 2.2]{BE05}:
\be\label{griff1}
\ee
\[(1+x+y+xy)^{[z]}=(1+x)^{[z]}(1+y)^{[z]}(1+\wp(z)Q(x,y))\in
{\mathbb F}_p[x,y,z]/(x^p,y^p),\] 
where 
\[Q(x,y)=((x+y+xy)^p-x^p-y^p-(xy)^p)/p\in (x,y)^p\subset{\mathbb Z}[x,y],\] 
$Q(x,y)^2\in (x^p,y^p)$. (See also \cite[pp. 9-10]{EU17}.)  We give a generalization of this formula. 

\begin{lemma}  \label{gen-griff2} 

\[((1+x+y+xy)^{[z]})^{[a]} = ((1+x)^{[z]}(1+y)^{[z]})^{[a]}(1+\wp(z)a Q(x,y))\]
in ${\mathbb F}_p[x,y,z,a]/(x^p,y^p)$. 

\end{lemma}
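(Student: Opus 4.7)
The plan is to deduce this from the BE05 formula (\ref{griff1}) by applying $(\,\cdot\,)^{[a]}$ to both sides. Write
\[B=(1+x)^{[z]}(1+y)^{[z]},\qquad C=1+\wp(z)\,Q(x,y),\]
so that (\ref{griff1}) reads $(1+x+y+xy)^{[z]}=BC$ in $\F_p[x,y,z]/(x^p,y^p)$. Because $Q(x,y)^2\in(x^p,y^p)$, one has $(C-1)^2=0$ in the ambient ring $\F_p[x,y,z,a]/(x^p,y^p)$; the definition of truncated exponentiation then collapses to $C^{[a]}=1+a(C-1)=1+\wp(z)a\,Q(x,y)$. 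The lemma is therefore equivalent to the multiplicativity statement
\[(BC)^{[a]}=B^{[a]}\cdot C^{[a]}\qquad\text{in }\F_p[x,y,z,a]/(x^p,y^p).\]

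Before attacking this identity, I would record three nilpotency facts in $\F_p[x,y,z,a]/(x^p,y^p)$. First, setting $U:=B-1$, every monomial of $U$ has positive total degree in $\{x,y\}$, so Frobenius forces $U^p\in(x^p,y^p)$ and hence $U^p=0$. Second, $N:=B(C-1)$ satisfies $N^2=0$ because $(C-1)^2=0$. Third, and crucially, $(C-1)\,U^{p-1}=0$: since $Q(x,y)\in(x,y)^p$ and $U\in(x,y)$, every monomial of $Q(x,y)U^{p-1}$ carries combined $\{x,y\}$-degree at least $2p-1$, but a monomial $x^iy^j$ with $i,j<p$ has $i+j\le 2p-2$, so every such monomial lies in $(x^p,y^p)$.

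With these in place, I expand $BC-1=U+N$ and use $N^2=0$:
\[(BC)^{[a]}=\sum_{m=0}^{p-1}\binom{a}{m}\bigl(U^m+m\,U^{m-1}N\bigr)=B^{[a]}+a\,N\sum_{j=0}^{p-2}\binom{a-1}{j}U^j,\]
where the second equality uses $m\binom{a}{m}=a\binom{a-1}{m-1}$ and a reindex. From $U^p=0$ together with Pascal's rule one verifies $B\cdot B^{[a-1]}=B^{[a]}$ and $\sum_{j=0}^{p-1}\binom{a-1}{j}U^j=B^{[a-1]}$, so that
\[(BC)^{[a]}=B^{[a]}+a\,N\,B^{[a-1]}-a\binom{a-1}{p-1}N\,U^{p-1}.\]
The middle term simplifies as $aN\,B^{[a-1]}=aB(C-1)B^{[a-1]}=a(C-1)B^{[a]}=B^{[a]}(C^{[a]}-1)$, delivering the desired product $B^{[a]}C^{[a]}$. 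The error term rewrites as $a\binom{a-1}{p-1}(C-1)BU^{p-1}=a\binom{a-1}{p-1}(C-1)U^{p-1}$ using $BU^{p-1}=U^{p-1}+U^p=U^{p-1}$, and this vanishes by the third nilpotency fact.

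The main obstacle is this error term $\binom{a-1}{p-1}(C-1)U^{p-1}$: the binomial coefficient $\binom{a-1}{p-1}$ is a nonzero polynomial in $a$, so its disappearance genuinely requires the specific degree estimate $Q(x,y)U^{p-1}\in(x^p,y^p)$ rather than any formal identity; everything else is a routine binomial manipulation enabled by $N^2=0$ and $U^p=0$.
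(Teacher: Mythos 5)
Your proof is correct, but it takes a genuinely different route from the paper's. The paper never isolates the multiplicativity statement $(BC)^{[a]}=B^{[a]}C^{[a]}$; instead it applies the Byott--Elder identity (\ref{griff1}) a \emph{second} time, with the substitution $x\mapsto D-1$, $y\mapsto\wp(z)Q(x,y)$, $z\mapsto a$ where $D=(1+x)^{[z]}(1+y)^{[z]}$, obtaining
\[
(D(1+\wp(z)Q(x,y)))^{[a]}=D^{[a]}\,(1+\wp(z)Q(x,y))^{[a]}\,(1+\wp(a)Q(D-1,\wp(z)Q(x,y))),
\]
and then kills the correction factor by observing that $Q(D-1,\wp(z)Q(x,y))=0$ since $Q$ lands in the $p$-th power of the ideal generated by its arguments, $(D-1)^p=0$ and $Q(x,y)^2=0$; the remaining factor collapses to $1+\wp(z)aQ(x,y)$ exactly as in your argument. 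Your approach replaces this second invocation of (\ref{griff1}) by a direct binomial expansion of $(1+U+N)^{[a]}$ with $U=B-1$, $N=B(C-1)$, $N^2=0$, which is more elementary and self-contained: it does not require checking that the substituted elements are legitimate arguments for (\ref{griff1}) (i.e., that $(D-1)^p=0$ and $(\wp(z)Q)^p=0$, a point the paper leaves implicit), and it surfaces explicitly the one place where a genuine degree estimate is needed, namely the vanishing of the boundary term $\binom{a-1}{p-1}(C-1)U^{p-1}$ via $Q(x,y)U^{p-1}\in(x^p,y^p)$. The trade-off is length: the paper's proof is a three-line corollary of machinery already in hand, while yours is a standalone computation; both are valid, and your intermediate identities ($B\cdot B^{[a-1]}=B^{[a]}$ from Pascal's rule and $U^p=0$, and $m\binom{a}{m}=a\binom{a-1}{m-1}$) all check out.
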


\begin{proof} From (\ref{griff1}) we obtain
\[((1+x+y+xy)^{[z]})^{[a]}=((1+x)^{[z]}(1+y)^{[z]}(1+\wp(z)Q(x,y)))^{[a]}\]
in ${\mathbb F}_p[x,y,z,a]/(x^p,y^p)$.   Put 
\[D=(1+x)^{[z]}(1+y)^{[z]},\]
so that
\[((1+x)^{[z]}(1+y)^{[z]}(1+\wp(z)Q(x,y)))^{[a]} = (D(1+\wp(z)Q(x,y)))^{[a]}.\] 
Now, by (\ref{griff1}),
\begin{eqnarray*}
(D(1+\wp(z)Q(x,y)))^{[a]} & =  &  ((1+(D-1))(1+\wp(z)Q(x,y)))^{[a]} \\
& = &  (1+(D-1))^{[a]}(1+\wp(z)Q(x,y))^{[a]}\\
&\quad & \cdot\ (1+\wp(a)Q(D-1,\wp(z)Q(x,y))). \\
\end{eqnarray*}
Now $Q(D-1,\wp(z)Q(x,y))=0$ since $x^p=y^p=0$ and $Q(x,y)^2=0$. Thus
\begin{eqnarray*}
(D(1+\wp(z)Q(x,y)))^{[a]} & = & (1+(D-1))^{[a]}(1+\wp(z)Q(x,y))^{[a]}\\
& = &  D^{[a]}(1+\wp(z)Q(x,y))^{[a]}.
\end{eqnarray*}
Moreover, $Q(x,y)^2=0$ implies that
\[(1+\wp(z)Q(x,y))^{[a]}=1+\wp(z)aQ(x,y).\]
Thus 
\[((1+x+y+xy)^{[z]})^{[a]}=((1+x)^{[z]}(1+y)^{[z]})^{[a]}(1+\wp(z)a Q(x,y))\]
in ${\mathbb F}_p[x,y,z,a]/(x^p,y^p)$. 

\end{proof}

\begin{proposition} \label{p^3dual}  Let $i_1,i_2,i_3\ge 0$.  Let $\mu,\alpha,\beta\in K$ with 
$\nu(\wp(\mu))\ge i_2-pi_1$, $\nu(\wp(\alpha)+\wp(\mu)\beta)\ge i_3-pi_1$, $\nu(\wp(\beta))\ge i_3-pi_2$, $\nu(\mu)\ge i_3-i_1$ and $i_2\ge i_3$. 
Then $E=E(i_1,i_2,i_3,\alpha,\beta,\mu)$ is an $R$-Hopf order in $K[C_p^3]$. 
\end{proposition}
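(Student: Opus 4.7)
The plan is to verify directly that $E = E(i_1,i_2,i_3,\alpha,\beta,\mu)$ satisfies the three defining conditions for an $R$-Hopf order in $K[C_p^3]$: it is a free $R$-submodule of the right rank, $K \otimes_R E \cong K[C_p^3]$ as $K$-Hopf algebras, and it is closed under the comultiplication, counit, and antipode inherited from $K[C_p^3]$.

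I would first introduce the units $Y_1 = g_1$, $Y_2 = g_2 g_1^{[\mu]}$, $Y_3 = g_3 g_1^{[\alpha]}(g_2 g_1^{[\mu]})^{[\beta]}$, so the generators of $E$ are $X_k = (Y_k - 1)/\pi^{i_k}$. Since $\chr(K) = p$ and $(g_i-1)^p = g_i^p - 1 = 0$, applying Frobenius to the truncated exponential expansions shows $(g_1^{[\alpha]})^p = (g_1^{[\mu]})^p = ((g_2 g_1^{[\mu]})^{[\beta]})^p = 1$, so each $Y_k^p = 1$ and $X_k^p = (Y_k^p - 1)/\pi^{p i_k} = 0$. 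Hence $E$ is spanned as an $R$-module by the $p^3$ monomials $X_1^{a_1} X_2^{a_2} X_3^{a_3}$ with $0 \le a_k \le p-1$, and these are $R$-linearly independent because the monomials $(Y_1-1)^{a_1}(Y_2-1)^{a_2}(Y_3-1)^{a_3}$ are related to $\{(g_1-1)^{a_1}(g_2-1)^{a_2}(g_3-1)^{a_3}\}$ by a unitriangular change of basis and so give a $K$-basis of $K[C_p^3]$. This establishes $E$ is $R$-free of rank $p^3$, and $K \otimes_R E \cong K[C_p^3]$ follows since $K\otimes_R E$ contains $Y_1 = g_1$, $g_2 = Y_2 Y_1^{[-\mu]}$, and $g_3 = Y_3 Y_1^{[-\alpha]} Y_2^{[-\beta]}$.

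Next I would check $\varepsilon$ and $S$ on generators. The counit gives $\varepsilon(Y_k) = 1$ (truncated exponentials and group elements evaluate to $1$), so $\varepsilon(X_k) = 0$. For the antipode, $Y_k^p = 1$ forces $Y_k^{-1} = Y_k^{p-1} \in E$, and since $(Y_k-1)^p = 0$ we have $Y_k^{-1} - 1 = \sum_{m=1}^{p-1}(-1)^m \pi^{m i_k} X_k^m$, so $S(X_k) = (Y_k^{-1}-1)/\pi^{i_k} \in E$. The main obstacle is verifying $\Delta(X_k) \in E \otimes_R E$. The case $k=1$ is immediate from $\Delta(g_1) = g_1\otimes g_1$. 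For $k=2$, formula~(\ref{griff1}) applied with $x = (g_1-1)\otimes 1$, $y = 1\otimes(g_1-1)$, $z = \mu$ gives
\[
\Delta(Y_2) = (Y_2 \otimes Y_2)\bigl(1 + \wp(\mu)\, Q((g_1-1)\otimes 1,\, 1\otimes(g_1-1))\bigr);
\]
because $Q(x,y) \in (x,y)^p$, the correction term is divisible by $\pi^{p i_1}$ in $E \otimes_R E$, and the condition $\nu(\wp(\mu)) \ge i_2 - pi_1$ then yields $\Delta(X_2) \in E \otimes_R E$.

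For $k=3$, I would use the identity $\Delta(u^{[\beta]}) = \Delta(u)^{[\beta]}$ (valid for $u = g_2 g_1^{[\mu]}$ since $\Delta(u)^p = \Delta(u^p) = 1\otimes 1$) to expand
\[
\Delta(Y_3) = (g_3 \otimes g_3)\,\Delta(g_1^{[\alpha]})\,\Delta\bigl((g_2 g_1^{[\mu]})^{[\beta]}\bigr).
\]
Formula~(\ref{griff1}) produces $\Delta(g_1^{[\alpha]})$ with a $\wp(\alpha)$-correction in $Q((g_1-1)\otimes 1, 1\otimes(g_1-1))$, while Lemma~\ref{gen-griff2} produces $\Delta((g_2 g_1^{[\mu]})^{[\beta]})$ and contributes an additional $\wp(\mu)\beta$-correction in the same $Q$. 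These two corrections collapse into a single $Q$-term with coefficient $\wp(\alpha) + \wp(\mu)\beta$, exactly the combination that appears on the dual side in Proposition~\ref{p^3}, alongside a separate $\wp(\beta)$-correction coming from Frobenius acting on the $Y_2$-slots. The hypotheses $\nu(\wp(\alpha) + \wp(\mu)\beta) \ge i_3 - pi_1$ and $\nu(\wp(\beta)) \ge i_3 - pi_2$ then force $\Delta(X_3) \in E \otimes_R E$. The technical crux of the proof is the careful bookkeeping of these correction terms and the verification at each stage that the $\pi$-divisibility is sufficient to land in $E \otimes_R E$.
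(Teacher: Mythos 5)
Your overall strategy is the paper's: show $E$ is $R$-free with generic fibre $K[C_p^3]$, then check closure under $\varepsilon$, $\Delta$ and $S$, with the comultiplication of the third generator as the crux, controlled by formula~(\ref{griff1}) and Lemma~\ref{gen-griff2}. Where you genuinely differ is in how the crux is organized. The paper writes $\Delta(g_2g_1^{[\mu]})$ as $(g_2\otimes 1)(1\otimes g_2)\Delta(g_1^{[\mu]})$ and must move the factors $g_2\otimes 1$, $1\otimes g_2$ through the outer $[\beta]$; this is precisely where the hypotheses $\nu(\mu)\ge i_3-i_1$ and $i_2\ge i_3$ enter, via $g_2-1\in\pi^{i_3}E(i_1,i_2,\mu)$. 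You instead factor $\Delta(Y_2)=(Y_2\otimes Y_2)(1+\wp(\mu)Q_1)$ with $Q_1=Q((g_1-1)\otimes 1,\,1\otimes(g_1-1))$ and split $(Y_2\otimes Y_2)^{[\beta]}$ in the variables $Y_2\otimes 1$, $1\otimes Y_2$ directly. Carried out carefully, your route uses only the three $\wp$-conditions and never invokes $\nu(\mu)\ge i_3-i_1$ or $i_2\ge i_3$; unused hypotheses are harmless, and this is arguably a cleaner organization than the paper's.

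Two steps need more care than your sketch gives them. First, applying the product formula to $\bigl((Y_2\otimes Y_2)(1+\wp(\mu)Q_1)\bigr)^{[\beta]}$ produces a cross-correction $\wp(\beta)\,Q\bigl(Y_2\otimes Y_2-1,\ \wp(\mu)Q_1\bigr)=\wp(\beta)\wp(\mu)(Y_2\otimes Y_2-1)^{p-1}Q_1$, which, unlike the analogous term in the proof of Lemma~\ref{gen-griff2}, does not vanish identically: $Y_2\otimes Y_2-1$ is not in the ideal generated by $(g_1-1)\otimes 1$ and $1\otimes(g_1-1)$. It must be estimated; its valuation is at least $(i_3-pi_2)+(i_2-pi_1)+(p-1)i_2+pi_1=i_3$, so it is harmless, but you have to write this down. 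Second, your antipode step assumes $S(Y_k)=Y_k^{-1}$, which is not automatic because $Y_2$ and $Y_3$ are not grouplike ($\Delta(Y_2)\ne Y_2\otimes Y_2$ in general). It is true: $S(u^{[z]})=(u^{-1})^{[z]}$, and $(u^{-1})^{[z]}=(u^{[z]})^{-1}$ follows from (\ref{griff1}) because every monomial of $Q(x,y)$ is divisible by $xy$ and has total degree at least $p$, so $Q(u-1,u^{-1}-1)=0$ once $(u-1)^p=0$. The paper sidesteps this entirely by deducing $S(E)\subseteq E$ from the coalgebra property and the convolution formula for the coinverse. With these two points supplied, your argument is complete.
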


\begin{proof}  By Lemma \ref{unit-lem}, $g_1^{[\alpha]}(g_2g_1^{[\mu]})^{[\beta]}$ is a unit in $H_{i_1,i_2,\mu}$.  Thus
$g_3\in E$ and so $E\otimes_R K=K[C_p^3]$.  Moreover,
\[\left  ({g_3g_1^{[\alpha]}(g_2g_1^{[\mu]})^{[\beta]}-1\over \pi^{i_3}}\right )^p=0\]
and so $E$ is an $R$-algebra which is a free $R$-submodule of $K[C_p^3]$.  

We next check that $E$ is a coalgebra, with counit and comultiplication induced by that of $K[C_p^3]$.  We have $\varepsilon(E)\subseteq R$.   In view of \cite[Proposition (31.2)]{TWE}, $E$ is a coalgebra if we can show that
\be \label{coalg}
\Delta(g_1^{[\alpha]}(g_2g_1^{[\mu]})^{[\beta]})\equiv g_1^{[\alpha]}(g_2g_1^{[\mu]})^{[\beta]}\otimes g_1^{[\alpha]}(g_2g_1^{[\mu]})^{[\beta]}
\ee
modulo $\pi^{i_3}E(i_1,i_2,\mu)\otimes E(i_1,i_2,\mu)$.

Let 
\[X=(g_1-1)/\pi^{i_1}\otimes 1,\quad  Y=1\otimes (g_1-1)/\pi^{i_1},\]
\[A= (g_2g_1^{[\mu]}-1)/\pi^{i_2}\otimes 1,\quad B= 1\otimes (g_2g_1^{[\mu]}-1)/\pi^{i_2}\]
\[T=(g_2-1)/\pi^{i_3}\otimes 1,\quad V= 1\otimes (g_2-1)/\pi^{i_3}.\]
By Lemma \ref{gen-griff2}

\[(((1+\pi^{i_1}X)(1+\pi^{i_1}Y))^{[\mu]})^{[\beta]}= ((1+\pi^{i_1}X)^{[\mu]}(1+\pi^{i_1}Y)^{[\mu]})^{[\beta]}(1+\wp(\mu)\beta Q(\pi^{i_1}X,\pi^{i_1}Y)).\]
By (\ref{griff1}),

\[((1+\pi^{i_1}X)(1+\pi^{i_1}Y))^{[\alpha]}=(1+\pi^{i_1}X)^{[\alpha]}(1+\pi^{i_1}Y)^{[\alpha]}(1+\wp(\alpha)Q(\pi^{i_1}X,\pi^{i_1}Y)).\]
Multiplication yields

\vspace{.2cm}

$((1+\pi^{i_1}X)(1+\pi^{i_1}Y))^{[\alpha]}(((1+\pi^{i_1}X)(1+\pi^{i_1}Y))^{[\mu]})^{[\beta]}$

\vspace{.2cm}

\hspace{3cm}$ = (1+\pi^{i_1}X)^{[\alpha]}(1+\pi^{i_1}Y)^{[\alpha]}((1+\pi^{i_1}X)^{[\mu]}(1+\pi^{i_1}Y)^{[\mu]})^{[\beta]}$

\vspace{.2cm}

\hspace{5cm}$\cdot\ (1+(\wp(\alpha)+\wp(\mu)\beta) Q(\pi^{i_1}X,\pi^{i_1}Y))$.

\vspace{.2cm}

\noindent Now the condition $\nu(\wp(\alpha)+\wp(\mu)\beta))\ge i_3-pi_1$ implies that

\[(\wp(\alpha)+\wp(\mu)\beta) Q(\pi^{i_1}X,\pi^{i_1}Y)\in \pi^{i_3}R[X,Y,A,B].\]
Thus

$((1+\pi^{i_1}X)(1+\pi^{i_1}Y))^{[\alpha]}(((1+\pi^{i_1}X)(1+\pi^{i_1}Y))^{[\mu]})^{[\beta]}$

\vspace{.2cm}

\hspace{3cm}$ \equiv (1+\pi^{i_1}X)^{[\alpha]}(1+\pi^{i_1}Y)^{[\alpha]}((1+\pi^{i_1}X)^{[\mu]}(1+\pi^{i_1}Y)^{[\mu]})^{[\beta]}$

\vspace{.2cm}

\noindent modulo $\pi^{i_3}R[X,Y,A,B]$. 

Moreover, $\nu(\mu)\ge i_3-i_1$ and $i_2\ge i_3$ imply that 
\[g_2-1\in \pi^{i_3}E(i_1,i_2,\mu),\]
hence
\[\Delta(g_2-1)=\pi^{i_3}T+\pi^{i_3}V+\pi^{2i_3}TV\in \pi^{i_3}R[X,Y,A,B].\]
Consequently, 


\vspace{.2cm}

$((1+\pi^{i_1}X)(1+\pi^{i_1}Y))^{[\alpha]}((1+\pi^{i_3}T)(1+\pi^{i_3}V)((1+\pi^{i_1}X)(1+\pi^{i_1}Y))^{[\mu]})^{[\beta]}$

\vspace{.2cm}

\hspace{1cm}$\equiv (1+\pi^{i_1}X)^{[\alpha]}(1+\pi^{i_1}Y)^{[\alpha]}((1+\pi^{i_3}T)(1+\pi^{i_3}V)(1+\pi^{i_1}X)^{[\mu]}(1+\pi^{i_1}Y)^{[\mu]})^{[\beta]}$

\vspace{.2cm}

\noindent modulo $\pi^{i_3}R\left [X,Y,A,B\right ]$. 

By formula (\ref{griff1}), the condition 
\[\nu(\wp(\beta))\ge i_3-pi_2\] implies 
\[((1+\pi^{i_2}A)(1+\pi^{i_2}B))^{[\beta]}\equiv (1+\pi^{i_2}A)^{[\beta]}(1+\pi^{i_2}B)^{[\beta]}\]
modulo $\pi^{i_3}R[X,Y,A,B]$.   

So if $\nu(\wp(\alpha)+\wp(\mu)\beta))\ge i_3-pi_1$, $\nu(\wp(\beta))\ge i_3-pi_2$ and $\nu(\mu)\ge i_3-i_1$ hold, then

\vspace{.2cm}

$((1+\pi^{i_1}X)(1+\pi^{i_1}Y))^{[\alpha]}((1+\pi^{i_3}T)(1+\pi^{i_3}V)((1+\pi^{i_1}X)(1+\pi^{i_1}Y))^{[\mu]})^{[\beta]}$

\vspace{.2cm}

$\quad \equiv (1+\pi^{i_1}X)^{[\alpha]}(1+\pi^{i_1}Y)^{[\alpha]}((1+\pi^{i_3}T)(1+\pi^{i_1}X)^{[\mu]})^{[\beta]}((1+\pi^{i_3}V)(1+\pi^{i_1}Y)^{[\mu]})^{[\beta]}$

\vspace{.2cm}

\noindent modulo $\pi^{i_3}R[X,Y,A,B]$, which is condition (\ref{coalg}). 

Finally, we show that $S(E)\subseteq E$.  But this follows from the coalgebra condition and observation that the coinverse map is 

\[m(I\otimes m)(I\otimes I\otimes m)\cdots (I^{2p-3}\otimes m)(I^{2p-3}\otimes \Delta)\cdots (I\otimes I\otimes \Delta)(I\otimes \Delta)\Delta,\] 
where $m: E\otimes E\rightarrow E$ denotes multiplication in $E$. 
\end{proof}

We have the following:  Suppose $i_1,i_2,i_3$ are non-negative integers.  Suppose  $\mu, \alpha, \beta\in K$ satisfy the conditions $\nu(\wp(\mu))\ge i_2-pi_1$, $\nu(\wp(\alpha)+\wp(\mu)\beta)\ge i_3-pi_1$, 
$\nu(\wp(\beta))\ge i_3-pi_2$, and $\nu(\mu)\ge i_3-i_1$.   Then by Proposition \ref{3case_i}
\[H^*_{i_1,i_2,i_3,\mu,\alpha,\beta} = R[\pi^{i_1}(\xi_{1,0,0}-\mu\xi_{0,1,0}-\alpha\xi_{0,0,1}),\pi^{i_2}(\xi_{0,1,0}-\beta\xi_{0,0,1}),\pi^{i_3}\xi_{0,0,1}]\]
is an $R$-Hopf order in $(K[C_p^3])^*$ and by Proposition \ref{p^3dual}
\[E(i_1,i_2,i_3,\mu,\alpha,\beta)=R\left [{g_1-1\over\pi^{i_1}},{g_2g_1^{[\mu]}-1\over\pi^{i_2}},
{g_3g_1^{[\alpha]}(g_2g_1^{[\mu]})^{[\beta]}-1\over \pi^{i_3}}\right ]\]
is an $R$-Hopf order in $K[C_p^3]$. 

We claim that these Hopf orders are duals to each other.

\begin{proposition} \label{3duals} With the conditions as above,
 \[(H^*_{i_1,i_2,i_3,\mu,\alpha,\beta})^*=E(i_1,i_2,i_3,\mu,\alpha,\beta).\]
\end{proposition}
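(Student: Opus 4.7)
The plan is to combine a direct pairing computation on generators with a short-exact-sequence argument to pin down the dual. First I would label the generators $y_1=\pi^{i_1}(\xi_{1,0,0}-\mu\xi_{0,1,0}-\alpha\xi_{0,0,1})$, $y_2=\pi^{i_2}(\xi_{0,1,0}-\beta\xi_{0,0,1})$, $y_3=\pi^{i_3}\xi_{0,0,1}$ of $H^*_{i_1,i_2,i_3,\mu,\alpha,\beta}$, and $e_1=(g_1-1)/\pi^{i_1}$, $e_2=(g_2g_1^{[\mu]}-1)/\pi^{i_2}$, $e_3=(g_3g_1^{[\alpha]}(g_2g_1^{[\mu]})^{[\beta]}-1)/\pi^{i_3}$ of $E(i_1,i_2,i_3,\mu,\alpha,\beta)$. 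Expanding the truncated exponentials to first order in $g_i-1$, one finds $\pi^{i_2}e_2=(g_2-1)+\mu(g_1-1)+(\text{higher})$ and $\pi^{i_3}e_3=(g_3-1)+\beta(g_2-1)+(\alpha+\mu\beta)(g_1-1)+(\text{higher})$. Applying the defining duality $\langle\xi_{a,b,c},(g_1-1)^j(g_2-1)^k(g_3-1)^l\rangle=\delta_{a,j}\delta_{b,k}\delta_{c,l}$, the $\mu,\alpha,\beta$ correction terms in $y_1,y_2$ cancel the cross terms in $e_2,e_3$ exactly, yielding $\langle y_i,e_j\rangle=\delta_{ij}\in R$.

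Next I would extend the pairing to the monomial bases $\{y_1^{a_1}y_2^{a_2}y_3^{a_3}\}$ and $\{e_1^{b_1}e_2^{b_2}e_3^{b_3}\}$ (with $0\le a_i,b_i\le p-1$) via the Hopf adjunctions $\langle hh',e\rangle=\langle h\otimes h',\Delta_E(e)\rangle$ and $\langle h,ee'\rangle=\langle\Delta_{H^*}(h),e\otimes e'\rangle$. Proposition~\ref{p^3dual} supplies $\Delta_E(E)\subseteq E\otimes E$ and Proposition~\ref{3case_i} supplies $\Delta_{H^*}(H^*_{i_1,i_2,i_3,\mu,\alpha,\beta})\subseteq H^*\otimes H^*$, so a double induction on the monomial degrees, anchored at the generator-generator pairing, produces $\langle H^*,E\rangle\subseteq R$. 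This gives an $R$-algebra inclusion $E\hookrightarrow (H^*_{i_1,i_2,i_3,\mu,\alpha,\beta})^*$.

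To promote the inclusion to equality, I would invoke the short exact sequence of $R$-Hopf orders
\[R\to H^*_{i_3}\to H^*_{i_1,i_2,i_3,\mu,\alpha,\beta}\to H^*_{i_1,i_2,\mu}\to R\]
intrinsic to the extension-theoretic construction of Section~4. Dualization is exact because each order is $R$-free of finite rank, and the $n=1,2$ dualities $(H^*_{i_3})^*=E(i_3)$, $(H^*_{i_1,i_2,\mu})^*=E(i_1,i_2,\mu)$ produce
\[R\to E(i_1,i_2,\mu)\to (H^*_{i_1,i_2,i_3,\mu,\alpha,\beta})^*\to E(i_3)\to R.\]
On the $E$ side, the sub-$R$-Hopf order generated by $e_1,e_2$ is $E(i_1,i_2,\mu)$; modulo its augmentation ideal, $g_1\equiv g_2\equiv 1$ forces the truncated exponentials $g_1^{[\alpha]}$ and $(g_2g_1^{[\mu]})^{[\beta]}$ to collapse to $1$, so $e_3$ reduces to $(g_3-1)/\pi^{i_3}$ and the quotient is $E(i_3)$. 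The inclusion from the previous step then gives a morphism of short exact sequences with identity maps on the outer terms, and the Five Lemma forces the middle map $E\hookrightarrow (H^*_{i_1,i_2,i_3,\mu,\alpha,\beta})^*$ to be an isomorphism.

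The main obstacle is the commutativity check in the last step: one must verify that the surjection $E\twoheadrightarrow E(i_3)$ on the top row matches the linear dual of $H^*_{i_3}\hookrightarrow H^*$ on the bottom row. This requires tracing the pushout diagram of Proposition~\ref{p^3} through the duality pairing, and it is where the precise alignment of the parameters $\mu,\alpha,\beta$ on both sides of the duality plays the decisive role.
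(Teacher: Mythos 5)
Your first half---pairing the primitive generators $y_1,y_2,y_3$ of $H^*_{i_1,i_2,i_3,\mu,\alpha,\beta}$ against the generators of $E$ and checking that the $\mu,\alpha,\beta$ corrections cancel to give $\langle y_i,e_j\rangle=\delta_{ij}$---is essentially the paper's containment step: the paper pairs the $y_i$ against the monomial basis $\rho_{a,b,c}$ of $E$ and concludes $H^*_{i_1,i_2,i_3,\mu,\alpha,\beta}\subseteq E^*$, and since $E^*$ is itself a Hopf order (hence an $R$-algebra) it suffices to test the algebra generators of $H^*$, so your double induction is more machinery than is needed. Where you diverge is the passage from containment to equality, and this is where your argument has a genuine gap. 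The sequences you compare are short exact sequences of $R$-Hopf orders, not of $R$-modules: exactness at the middle means $E(i_3)\cong E/E\cdot E(i_1,i_2,\mu)^+$, the composite of the two arrows is not zero as a module map, and the Five Lemma does not literally apply. To repair this you would need (a) that $E$ and $(H^*_{i_1,i_2,i_3,\mu,\alpha,\beta})^*$ are each free over the common sub-Hopf-order $E(i_1,i_2,\mu)$, (b) a Nakayama-type argument that an injection which is the identity on the sub and induces an isomorphism on the quotient is onto, and (c)---the point you yourself flag as unresolved---that the quotient map $E\twoheadrightarrow E(i_3)$ agrees with the dual of the inclusion $H^*_{i_3}\hookrightarrow H^*_{i_1,i_2,i_3,\mu,\alpha,\beta}$ coming from the pushout of Proposition \ref{p^3}. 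Without (c) the outer vertical maps of your diagram are not known to be the identity, and the conclusion does not follow.

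The paper sidesteps all of this with a discriminant computation: from the short exact sequences $R\to H^*_{i_2,i_3,\beta}\to H^*_{i_1,i_2,i_3,\mu,\alpha,\beta}\to H^*_{i_1}\to R$ and $R\to H^*_{i_3}\to E(i_1,i_2,i_3,\mu,\alpha,\beta)^*\to H^*_{i_1,i_2,\mu}\to R$ (the latter obtained by dualizing your $E$-sequence), together with the multiplicativity of discriminants \cite[(22.17) Corollary]{TWE} and the known $n=1,2$ discriminants, both orders have discriminant $(\pi^{p^3(p-1)(i_1+i_2+i_3)})$; a containment of $R$-orders with equal discriminants is an equality, so $H^*_{i_1,i_2,i_3,\mu,\alpha,\beta}=E(i_1,i_2,i_3,\mu,\alpha,\beta)^*$ and dualizing gives the claim. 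This uses the exact sequences only to compute discriminants, so no compatibility between the two sequences ever needs to be checked. I recommend you either adopt that route or supply (a)--(c) explicitly.
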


\begin{proof}   To prove the result, we show that 
\[H^*_{i_1,i_2,i_3,\mu,\alpha,\beta}\subseteq  E(i_1,i_2,i_3,\mu,\alpha,\beta)^*\]
and 
\[\disc(H^*_{i_1,i_2,i_3,\mu,\alpha,\beta})=\disc(E(i_1,i_2,i_3,\mu,\alpha,\beta)^*).\]

For the containment:  $\{\rho_{a,b,c}\}$, $0\le a,b,c\le p-1$, where
\[\rho_{a,b,c} = (g_1-1)^a(g_2g_1^{[\mu]}-1)^b(g_3g_1^{[\alpha]}(g_2g_1^{[\mu]})^{[\beta]}-1)^c/\pi^{ai_1+bi_2+ci_3}\]
is an $R$-basis for $E(i_1,i_2,i_3,\mu,\alpha,\beta)$.  Let $x=g_1-1$, $y=g_2-1$, $z=g_3-1$.  Then
\[\pi^{ai_1+bi_2+ci_3}\rho_{a,b,c} \equiv x^a(\mu x+y)^b(\alpha+\beta\mu)x+\beta y+z)^c\quad \mmod (x,y,z)^2.\]
We have

\[\langle\xi_{1,0,0},x^a(\mu x+y)^b(\alpha+\beta\mu)x+\beta y+z)^c\rangle= \left\{
\begin{array}{ll}
1 & \mbox{if $a=1,b=0,c=0$} \\
\mu & \mbox{if $a=0,b=1,c=0$} \\
\alpha+\beta\mu & \mbox{if $a=0,b=0,c=1$} \\
0 & otherwise.
\end{array}
\right.
\]

\[\langle\xi_{0,1,0},x^a(\mu x+y)^b(\alpha+\beta\mu)x+\beta y+z)^c\rangle= \left\{
\begin{array}{ll}
0 & \mbox{if $a=1,b=0,c=0$} \\
1 & \mbox{if $a=0,b=1,c=0$} \\
\beta & \mbox{if $a=0,b=0,c=1$} \\
0 & otherwise.
\end{array}
\right.
\]

\[\langle\xi_{0,0,1},x^a(\mu x+y)^b(\alpha+\beta\mu)x+\beta y+z)^c\rangle= \left\{
\begin{array}{ll}
0 & \mbox{if $a=1,b=0,c=0$} \\
0 & \mbox{if $a=0,b=1,c=0$} \\
1 & \mbox{if $a=0,b=0,c=1$} \\
0 & otherwise.
\end{array}
\right.
\]
Thus

\[\langle\pi^{i_1}(\xi_{1,0,0}-\mu\xi_{0,1,0}-\alpha\xi_{0,0,1}),\rho_{1,0,0}\rangle = 1,\]
\[\langle\pi^{i_1}(\xi_{1,0,0}-\mu\xi_{0,1,0}-\alpha\xi_{0,0,1}),\rho_{0,1,0}\rangle = 0,\]
\[\langle\pi^{i_1}(\xi_{1,0,0}-\mu\xi_{0,1,0}-\alpha\xi_{0,0,1}),\rho_{0,0,1}\rangle = 0,\]
and so
\[\langle\pi^{i_1}(\xi_{1,0,0}-\mu\xi_{0,1,0}-\alpha\xi_{0,0,1}),\rho_{a,b,c}\rangle = \delta_{a,1}\delta_{b,0}\delta_{c,0}.\]
Moreover,
\[\langle\pi^{i_2}(\xi_{0,1,0}-\beta\xi_{0,0,1}),\rho_{a,b,c}\rangle = \delta_{a,0}\delta_{b,1}\delta_{c,0}\]
and
\[\langle\pi^{i_3}\xi_{0,0,1},\rho_{a,b,c}\rangle = \delta_{a,0}\delta_{b,0}\delta_{c,1}.\]
Thus
\[H_{i_1,i_2,i_3,\mu,\alpha,\beta}^*\subseteq  E(i_1,i_2,i_3,\mu,\alpha,\beta)^*.\]

Regarding the discriminant statement, there is a short exact sequence of $R$-Hopf orders

\[R\rightarrow H_{i_2,i_3,\beta}^*\rightarrow H_{i_1,i_2,i_3,\mu,\alpha,\beta}^*\rightarrow H_{i_1}^*\rightarrow R.\]
By \cite[Proposition 2.2]{EU17}, $\disc(H_{i_1}^*)=(\pi^{p(p-1)i_1})$, by \cite[Proposition 3.2]{EU17}, $\disc(H_{i_2,i_3,\beta}^*)=(\pi^{p^2(p-1)(i_2+i_3)})$ and by \cite[(22.17) Corollary]{TWE}
\begin{eqnarray*}
\disc(H_{i_1,i_2,i_3,\mu,\alpha,\beta}^*) & = &   \disc(H_{i_2,i_3,\beta}^*)^p\disc(H_{i_1}^*)^{p^2} \\
& = & (\pi^{p^3(p-1)(i_1+i_2+i_3)}).
\end{eqnarray*}

There is a short exact sequence of Hopf orders 

\[R\rightarrow E(i_1,i_2,\mu)\rightarrow E(i_1,i_2,i_3,\mu,\alpha,\beta)\rightarrow E(i_3)\rightarrow R,\]
which dualizes as
\[R\rightarrow H_{i_3}^*\rightarrow E(i_1,i_2,i_3,\mu,\alpha,\beta)^*\rightarrow H_{i_1,i_2,\mu}^*\rightarrow R.\]
By \cite[Proposition 2.2]{EU17}, $\disc(H_{i_3}^*)=(\pi^{p(p-1)i_3})$, by \cite[Proposition 3.2]{EU17}, 
$\disc(H_{i_1,i_2,\mu}^*)=(\pi^{p^2(p-1)(i_1+i_2)})$ and by \cite[(22.17) Corollary]{TWE}
\begin{eqnarray*}
\disc(E(i_1,i_2,i_3,\mu,\alpha,\beta)^*) & = &   \disc(H_{i_3}^*)^{p^2}\disc(H_{i_1,i_2,\mu}^*)^p \\
& = & (\pi^{p^3(p-1)(i_1+i_2+i_3)}).
\end{eqnarray*}
Thus 
\[\disc(H_{i_1,i_2,i_3,\mu,\alpha,\beta}^*) = \disc(E(i_1,i_2,i_3,\mu,\alpha,\beta)^*)\]
which completes the proof. 
\end{proof}



We summarize as follows.  Let $H$ be an $R$-Hopf order in $K[C_p^3]$.  Then $H$ induces a short exact sequence of $R$-Hopf orders
\[R\rightarrow  E(i_1,i_2,\mu)\rightarrow H\rightarrow E(i_3)\rightarrow R\]
where $E(i_1,i_2,\mu)$ is an $R$-Hopf order in $K[C_p^2]$ and $E(i_3)$ is an $R$-Hopf order in $K[C_p]$. 
Necessarily $i_1,i_2,i_3\ge 0$ and $\nu(\wp(\mu))\ge i_2-pi_1$. 
As in Remark \ref{mu-cond} we assume that $\nu(\mu)\ge i_3-i_1$ and $i_2\ge i_3$. 

\begin{proposition}  With $H$ as above we have
\[H=R\left [{g_1-1\over\pi^{i_1}},{g_2g_1^{[\mu]}-1\over\pi^{i_2}},
{g_3g_1^{[\alpha]}(g_2g_1^{[\mu]})^{[\beta]}-1\over \pi^{i_3}}\right ]\]
where $\alpha,\beta$ are elements of $K$ satisfying
$\nu(\wp(\alpha)+\wp(\mu)\beta)\ge i_3-pi_1$ and $\nu(\wp(\beta))\ge i_3-pi_2$.
\end{proposition}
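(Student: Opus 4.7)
The plan is to pass to the linear dual $J = H^*$, apply the classification of Hopf orders in $(K[C_p^3])^*$ obtained in Proposition \ref{p^3}, and then dualize back via Proposition \ref{3duals}. Since $H$ is an $R$-Hopf order in $K[C_p^3]$, its linear dual $J$ is an $R$-Hopf order in $(K[C_p^3])^*$. Applying linear duality to the given short exact sequence
\[R\rightarrow E(i_1,i_2,\mu)\rightarrow H\rightarrow E(i_3)\rightarrow R\]
yields
\[R\rightarrow E(i_3)^*\rightarrow J\rightarrow E(i_1,i_2,\mu)^*\rightarrow R.\]
By the duality results established in Section 5 for the cases $n=1$ and $n=2$, one has $E(i_3)^*=H_{i_3}^*$ and $E(i_1,i_2,\mu)^*=H_{i_1,i_2,\mu}^*$, so this becomes a short exact sequence of exactly the shape (\ref{ses3}) used to classify Hopf orders in $(K[C_p^3])^*$.

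Next I would apply Proposition \ref{p^3} to $J$. This yields elements $\alpha,\beta\in K$ satisfying
\[\nu(\wp(\alpha)+\wp(\mu)\beta)\ge i_3-pi_1\quad\hbox{and}\quad\nu(\wp(\beta))\ge i_3-pi_2,\]
together with an isomorphism $J\cong H^*_{i_1,i_2,i_3,\mu,\alpha,\beta}$. The parameters $i_1,i_2,i_3,\mu$ produced here agree with those in the given SES for $H$ because linear duality is an involution on the category of finite free $R$-Hopf orders, so the outer terms of the dualized SES recover the original ones up to the identifications noted above.

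With the standing hypotheses $\nu(\mu)\ge i_3-i_1$ and $i_2\ge i_3$ of Remark \ref{mu-cond} in force, all of the conditions required by Proposition \ref{3duals} are satisfied. Dualizing once more and invoking that proposition then gives
\[H\cong (H^*_{i_1,i_2,i_3,\mu,\alpha,\beta})^*=E(i_1,i_2,i_3,\mu,\alpha,\beta)=R\left[\frac{g_1-1}{\pi^{i_1}},\frac{g_2g_1^{[\mu]}-1}{\pi^{i_2}},\frac{g_3g_1^{[\alpha]}(g_2g_1^{[\mu]})^{[\beta]}-1}{\pi^{i_3}}\right],\]
which is the desired form. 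The main technical point is the parameter-matching step: one must check that the $i_1,i_2,i_3,\mu$ extracted when Proposition \ref{p^3} is applied to $J$ coincide with those in the given SES for $H$. This reduces to the involutivity of linear duality together with the fact that each of the Hopf orders $E(i_3)$, $E(i_1,i_2,\mu)$, $H_{i_3}^*$, and $H_{i_1,i_2,\mu}^*$ is uniquely determined by its displayed parameters, so no further computation is required beyond what is already encoded in the classifications recalled above.
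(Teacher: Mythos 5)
Your proof is correct and follows essentially the same route as the paper: dualize $H$ to an $R$-Hopf order $J=H^*$ in $(K[C_p^3])^*$, apply Proposition \ref{p^3} to write $J\cong H^*_{i_1,i_2,i_3,\mu,\alpha,\beta}$, and then dualize back via Proposition \ref{3duals}. The paper's own proof is just these three steps stated tersely; your additional remarks on matching the parameters $i_1,i_2,i_3,\mu$ across the dualized short exact sequence make explicit a point the paper leaves implicit, but introduce nothing different in substance.
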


\begin{proof}  Given $H$, its dual $J=H^*$ is an $R$-Hopf order in $(K[C_p^3])^*$.  By Proposition \ref{p^3}, $J\cong H_{i_1,i_2,i_3,\mu,\alpha,\beta}^*$ where $\alpha,\beta\in K$ satisfy
$\nu(\wp(\alpha)+\wp(\mu)\beta)\ge i_3-pi_1$ and $\nu(\wp(\beta))\ge i_3-pi_2$.
By Proposition \ref{3duals}, $H=(H_{i_1,i_2,i_3,\mu,\alpha,\beta}^*)^* = E(i_1,i_2,i_3,\mu,\alpha,\beta)$. 

\end{proof}

\end{document}